\documentclass[a4paper,11pt,abstract=on]{scrartcl}

\usepackage{amsmath,amsthm,amsfonts,amssymb}
\usepackage{mathtools}

\newtheorem{theorem}{Theorem}[section]
\newtheorem{lemma}[theorem]{Lemma}
\newtheorem{corollary}[theorem]{Corollary}
\newtheorem{proposition}[theorem]{Proposition}
\theoremstyle{definition}
\newtheorem{remark}[theorem]{Remark}

\newtheorem{example}[theorem]{Example}

\usepackage[style=numeric,maxbibnames=99]{biblatex}
\usepackage{enumerate}
\usepackage{algorithm}
\usepackage[noend]{algpseudocode}
\usepackage{hyperref}
\usepackage{xcolor}
\usepackage{bbm}
\usepackage{pgfplots}

\usepackage{booktabs}
\usepackage{multirow}

\usepackage{nicefrac}
\usepackage{orcidlink}

\usepackage{comment}

\usepackage{wrapfig}

\allowdisplaybreaks

\usepackage{listings}
\usepackage{cleveref}

\definecolor{plotblue}{HTML}{0000FF}
\definecolor{plotorange}{HTML}{FFA500}
\definecolor{plotgreen}{HTML}{008000}
\definecolor{empiricalblue}{HTML}{1F77B4}
\definecolor{empiricalorange}{HTML}{D55E00}
\definecolor{empiricalgreen}{HTML}{009E73}
\definecolor{boundpurple}{HTML}{9467BD}

\newcommand{\legline}[2][]{%
  \tikz[baseline=-0.5ex]{%
    \draw[line width=0.8pt,#1] (0,0) -- (0.65,0);
    \if\relax\detokenize{#2}\relax\else
      \node[inner sep=0pt,#1] at (0.325,0) {$#2$};
    \fi
  }%
}

\newcommand{\legband}{%
  \tikz[baseline=-0.5ex]{%
    \fill[empiricalblue,opacity=0.2]
      (0,-0.08) rectangle (0.65,0.08);
    \draw[empiricalblue,line width=0.8pt]
      (0,0) -- (0.65,0);
  }%
}

\newcommand{\legbandred}{%
  \tikz[baseline=-0.5ex]{%
    \fill[red,opacity=0.2]
      (0,-0.08) rectangle (0.65,0.08);
  }%
}

\newcommand{\legbandunified}{%
  \tikz[baseline=-0.5ex]{%
    \fill[gray,opacity=0.2]
      (0,-0.08) rectangle (0.65,0.08);
    \draw[gray,line width=0.8pt]
      (0,0) -- (0.65,0);
  }%
}

\definecolor{codegreen}{rgb}{0,0.6,0}
\definecolor{codegray}{rgb}{0.5,0.5,0.5}
\definecolor{codepurple}{rgb}{0.58,0,0.82}
\definecolor{backcolour}{rgb}{0.95,0.95,0.92}

\lstdefinestyle{mystyle}{ 
    commentstyle=\color{codegreen},
    keywordstyle=\color{magenta},
    numberstyle=\tiny\color{codegray},
    stringstyle=\color{codepurple},
    basicstyle=*family\footnotesize,
    breakatwhitespace=false,         
    breaklines=true,                 
    captionpos=b,                    
    keepspaces=true,                 
    showspaces=false,                
    showstringspaces=false,
    showtabs=false,                  
    tabsize=2
}
\newcommand{\EE}{\mathbb{E}}
\newcommand{\PP}{\mathbb{P}}
\newcommand{\RR}{\mathbb{R}}

\newcommand{\NN}{\mathbb{N}}

\newcommand{\sphere}{\mathbb{S}}
\newcommand{\dd}{\mathrm{d}}

\newcommand{\Orth}{\mathrm{O}}

\DeclarePairedDelimiterX{\scp}[2]{\langle}{\rangle}{#1, #2}
\DeclarePairedDelimiter{\abs}{\vert}{\vert}
\DeclarePairedDelimiter{\norm}{\lVert}{\rVert}

\DeclareMathOperator{\sign}{sign}
\DeclareMathOperator{\diag}{diag}

\let\dim\relax
\DeclareMathOperator{\dim}{dim}
\DeclareMathOperator*{\argmax}{argmax}

\DeclareMathOperator*{\grad}{grad}
\DeclareMathOperator*{\spec}{spec}

\newcommand{\herA}{A^{\mathrm H}}

\usepackage{caption}
\title{Stochastic Rayleigh–Ritz Ascent: Global Linear Convergence under a Spectral Gap}

\author{Jonas Bresch\,\orcidlink{0009-0009-3855-0933}%
\thanks{Ludwig-Maximilians-Universität München, 
\href{https://www.math.lmu.de/math4ai/en/}{Bavarian AI Chair for Mathematical Foundation of Artificial Intelligence}, Akademiestr. 7, 80799 Munich, Germany. \href{mailto:jonas.bresch@math.lmu.de}{jonas.bresch@math.lmu.de}.}
\thanks{Munich Center for Machine Learning (MCML).}
\and
Oleh Melnyk,\orcidlink{0000-0002-1816-7613}%
\thanks{Institute of Mathematics, Technical University Berlin, Straße des 17. Juni 136, 10623 Berlin, Germany,
\href{mailto:melnyk@math.tu-berlin.de}{melnyk@math.tu-berlin.de}. 
\url{https://www.tu.berlin/imageanalysis}.}
}

\begin{document}
\maketitle

\begin{abstract}
    We consider the adjoint-free random-plane iteration algorithm for finding the top eigenvector of a matrix proposed in \cite{bresch2024matrixfreestochasticcalculationoperator}. Unlike standard methods such as the power method or gradient descent, the above algorithm lacked linear convergence guarantees in the presence of a spectral gap. In this paper, we establish explicit spectral-gap-dependent global linear convergence rates in expectation, providing quantitative guarantees for the empirically observed convergence. 
\end{abstract}

\noindent\textbf{AMS Classification:}
65F35, 
15A60, 
68W20  

\noindent\textbf{Keywords:} 
    Operator norm,
    generalized Rayleigh quotient,
    stochastic Rayleigh–Ritz,
    zeroth-order method,
    convergence analysis,
    linear rate

\section{Introduction}
\label{sec:intro}

Estimating the operator norm of a matrix $A$ 
amounts to maximizing the Rayleigh quotient of $A^*A$ 
over the unit sphere
\begin{equation}
    \label{eq: prob}
    \|A\|^2
    = \max_{\|v\|=1} \langle v,A^*Av\rangle
    = \max_{\|v\|=1} \|Av\|^2.
\end{equation}
Although the objective can be evaluated using only forward
applications of $A$, classical algorithms for finding the maximizer, such as the power method \cite{kuczynski1992estimating,delcorso1997estimating,parlett1998symmetriceigenvalueproblem} or Riemannian gradient ascent \cite{alimisis2021distributed}, require access to the adjoint $A^*$.
An exact adjoint implementation, however, may be unavailable or computationally expensive in certain scenarios. For instance, in imaging applications such as computed tomography~\cite{buzug2008ct}, the adjoint is often replaced by a computationally tractable approximation and discretization scheme
to reduce memory use and improve computational speed on GPUs \cite{van2016fast,hansen2021GMRESMF,huber2025convergence,hansen2021CT}.
The resulting adjoint mismatch can adversely affect reconstruction quality%
~\cite{xie2015effective,zhang2016unmatchedprojback,peterson2017monte,bresch2024matrixfreestochasticcalculationoperator,bresch2025computingadjointmismatchlinear}.
For operator-norm estimation, replacing $A^*$ by an approximation or a discretization, say $V^* \approx A^*$, also changes the operator used in the iteration, cf.~\cite[Sec.~1.1]{bresch2024matrixfreestochasticcalculationoperator} and~\cite[Sec.~3.3]{bresch2025computingadjointmismatchlinear}, and therefore may be misleading for a successful application of power-schemes \cite[Sec.~28~\&~48]{trefethen2005spectrapseudospectra}.
More precisely, if $V^*A$ is non-normal, power-schemes may fail to converge or may return estimates far from the leading singular value. A full understanding of the impact of $V^*$ requires further analysis \cite[p.~446]{trefethen2005spectrapseudospectra}, which should be bypassed by avoiding the use of either $V^*$ or $A^*$.
These considerations motivate adjoint-free methods that estimate the norm of $A$ using only forward evaluations so that they can be applied to black-box implementations while retaining provable convergence guarantees.

Random-plane optimization provides a simple approach for solving \eqref{eq: prob} without involving the adjoint $A^*$.
In \cite{bresch2024matrixfreestochasticcalculationoperator}, the authors proposed a method that at each iteration samples a direction in a tangent plane to the sphere at the current iterate. Then, the objective is exactly maximized over the unit circle in the plane spanned by the current iterate and the sampled direction. The resulting subproblem can be solved using only applications of $A$ and inner products. The detailed description of the algorithm can be found in Sec.~\ref{sec: preliminaries}.
This approach was generalized to the estimation of adjoint mismatch between two linear operators in~\cite{bresch2025computingadjointmismatchlinear} and extended to estimate generalized Rayleigh quotients in~\cite{bresch2025stochastic}. Together, these works establish sublinear convergence guarantees and residual estimates, while the numerical trials therein indicate much faster linear convergence rates.
Results in~\cite{bresch2025stochastic} show that the random-plane iteration can be viewed as a Riemannian zeroth-order method with an optimized step size, and propose a variance reduction through multi-sample gradient estimators. Given this connection to zeroth-order optimization methods, we explore the connections to other algorithms in the literature, indicate their linear convergence guarantees, and discuss their differences.

Riemannian zeroth-order gradient methods \cite{li2023stochastic} with constant step size are shown to converge sublinearly to the critical points of non-convex functions. Their Euclidean counterparts, e.g., in \cite{Ji2019ImprovedZeroth-Order, kazemi2024efficient}, exhibit stronger (local) linear rates under the (local) Polyak--\L{}ojasiewicz (PL) condition. In the context of maximizing the nonconstant Rayleigh quotient \eqref{eq: prob}, nonleading eigenvectors are critical points that are not globally optimal, limiting the performance of the mentioned general guarantees.
Inexact Riemannian gradient descent~\cite{zhou2025inexact} provides another possible
framework through controlled gradient errors and Kurdyka--\L{}ojasiewicz (KL) inequalities. A single random tangent direction does not, in general, satisfy the uniform relative-accuracy requirement of that analysis, while its alternative absolute-error conditions would require separate verification.

In contrast, first-order methods analyzed specifically for eigenvalue problem \eqref{eq: prob} provide more insight into the local geometry.
Riemannian steepest ascent~\cite{alimisis2021distributed, alimisis2024geodesic,alimisis2024gradient} and the randomized submanifold method~\cite{Han2025EfficientOptimizationOrthogonalityConstraint} converge linearly under the Riemannian PL condition. In particular, in~\cite{alimisis2021distributed} the rate of convergence is directly linked to the spectral gap of $A^*A$. 
However, these methods use the adjoint to perform updates and are not available in the setting of this paper.

Motivated by these observations, we establish explicit local and global linear rates for the expected objective error under a spectral-gap assumption. Our local rate estimate, which depends on the spectral gap, spectral width, and dimension, is near-optimal up to a constant multiplicative factor that corresponds to the size of the convergence neighborhood.

The paper is structured in the following way.
The preliminaries in Sec.~\ref{sec: preliminaries} cover necessary background on the random-plane algorithm for estimating the operator norm \cite{bresch2024matrixfreestochasticcalculationoperator}.
Sec.~\ref{sec: contributions} states the main result.
Sec.~\ref{sec: qualitative ascent bounds} establishes the spectral inequalities and their connection to local expected linear convergence.
We discuss results of Sec.~\ref{sec: qualitative ascent bounds} in the context of the PL condition in Sec.~\ref{sec: PL KL background}. 
Sec.~\ref{sec: linear convergence rate} derives the global linear convergence in expectation.
In Sec.~\ref{sec: generalized quotient}, we extend the analysis to the zeroth-order estimation for the generalized Rayleigh quotient considered in \cite{bresch2025stochastic}. We compare our theoretical findings with the empirical performance of the random-plane algorithm in Sec.~\ref{sec:num}, followed by a conclusion.

\subsection{Preliminaries}
\label{sec: preliminaries}

\paragraph{Algorithm.} 
We consider the problem of finding the eigenvector corresponding to the leading eigenvalue of $A^*A$ for a given matrix $A \in \RR^{m \times d}$ without involving the adjoint $A^*$. This can be achieved through the stochastic random-plane algorithm \cite[Alg.~1]{bresch2024matrixfreestochasticcalculationoperator}. For our analysis, we focus on its Rayleigh-Ritz reformulation discussed in \cite[Sec.~8.6]{bresch2025stochastic}. That is, let $v^0 \in \sphere^{d-1} \coloneqq \{ x \in \RR^{d} \mid \norm{x} = 1 \}$ be the initial guess sampled uniformly at random, with $\norm{\cdot}$ denoting the Euclidean norm in $\RR^d$.  
Then, the sequence of iterates $(v^k)_{k \ge 0}$ is constructed by iteratively solving 
\begin{equation}
    \label{eq: plane}
    v^{k+1}\in\argmax_{\substack{
    z\in\operatorname{span}\{v^k,x^k\}\\ \|z\|=1}}\|Az\|^2,
\end{equation}
where the direction $x^k$ is sampled uniformly at random on the unit sphere in the tangent space $T_{v^k} = \{x \in \RR^d \mid \scp{x}{v^k} = 0\}$ to the sphere $\sphere^{d-1}$ at the point $v^k$. This is achieved by sampling $y \sim \mathcal N(0,I_d)$, the standard Gaussian random vector, and setting
\begin{equation}
    \label{eq: construction}
     x^k \coloneqq \frac{P_{T_{v^k}} y}{\norm{P_{T_{v^k}} y}},
\end{equation}
where $P_{T_{v^k}} \coloneqq I_d - v^k(v^k)^*$ is the orthogonal projector
onto the tangent space and $I_d$ is the $d \times d$ identity matrix.

\paragraph{Probability Space and Distributions.}

We can associate the samples $(v^k)_{k \ge 0}$ and $(x^k)_{k \ge 0}$ to the sequences of random variables.
Following \cite[Rem.~4.1]{bresch2025stochastic}, we consider a probability space $(\Omega, \mathcal A, \PP)$
and a sequence of random variables $V^k : \Omega \to \sphere^{d-1}$ with law \smash{$\PP_{V^k} \coloneqq V^k_{\#} \PP = \PP (V^k)^{-1}$}. By construction, $\PP_{V^0} = \sigma_{\sphere^{d-1}}$, the Haar measure on the $(d-1)$-dimensional unit sphere and $v^0 \sim V^0$. Moreover, for each sample $v^k$, we consider random variables $X^k : \Omega \to T_{v^k} \cap \sphere^{d-1} \simeq \sphere^{d-2}$ almost surely. Since they depend on $v^k$, by \cite[Lem.~4.1]{bresch2025stochastic}, their conditional distribution given $V^k = v^k$ is uniform, i.e., $\PP_{X^k \mid V^k = v^k} = \mathcal U(T_{v^k} \cap \sphere^{d-1}) = \sigma_{\sphere^{d-2}}$.
As a result, we can view $(V^k)_{k \in \mathbb N}$ as a Markov chain with values on $\sphere^{d-1}$. Furthermore, we say that something holds almost surely (a.s.) if its exceptional event has probability zero.

\paragraph{Solutions of \eqref{eq: plane}.}

Since $v^k$ and $x^k$ are orthogonal by construction, any $z \in \operatorname{span}\{v^k,x^k\}$ with $\norm{z}=1$ can be written as $z = u_1 v^k + u_2 x^k$ with $\norm{u} = 1$. The subproblem \eqref{eq: plane} then corresponds to finding the leading eigenvalue $\rho_{k+1}$ and eigenvector $u \in \RR^2$ of the system matrix
\begin{equation}\label{eq: coefs}
    H_k \coloneqq \begin{pmatrix} \scp{A v^k}{A v^k} & \scp{A v^k}{A x^k} \\ \scp{A v^k}{A x^k} & \scp{A x^k}{A x^k} \end{pmatrix} \eqqcolon \begin{pmatrix}\rho_k&a_k\\a_k&s_k\end{pmatrix},
\end{equation}
see \cite[Rem.~3.4]{bresch2026genoperatornorm} for further details. 

If $a_k = 0$, $H_k$ is a diagonal matrix and the maximizer is determined by the larger of the values $\rho_k$ and $s_k$,
\[
\rho_{k+1} = \begin{cases}
    \rho_k, & \rho_k \ge s_k, \\
    s_k, & \rho_k < s_k,
\end{cases}
\qquad
v^{k+1} = \begin{cases}
    v^k, & \rho_k \ge s_k, \\
    x^k, & \rho_k < s_k.
\end{cases}
\]
Otherwise, if $a_k \neq 0$, according to \cite[Prop.~2.4]{bresch2024matrixfreestochasticcalculationoperator}, the unique maximizer of \eqref{eq: plane} is given by
\begin{equation*}
    \label{eq: update}
    v^{k+1} = \frac{v^k + \tau^k x^k}{\norm{v^k + \tau^k x^k}}, 
    \quad 
    \tau^k \coloneqq \frac{s_k - \rho_k}{2a_k} + \sign(a_k) \sqrt{\tfrac{(s_k - \rho_k)^2}{4a_k^2} + 1}.
\end{equation*}
In all cases, we have 
\begin{equation}
    \label{eq: nondecreasing}
    \norm{A v^{k+1}}^2 = \rho_{k+1} \geq \rho_k = \norm{A v^{k}}^2,
\end{equation}
and the algorithm produces a nondecreasing sequence $(\rho_k)_{k \ge 0}$. 

\paragraph{Sublinear Convergence.}

To put the generated sequence $(v^k)_{k \ge 0}$ in perspective of finding the leading eigenvector of $A^*A$ for $A \in \RR^{m \times d}$, let us denote the eigenvalues of $A^*A \in \RR^{d\times d}$ by $\lambda_1 \ge \lambda_2\geq\cdots\geq\lambda_d\geq0$. Throughout the paper, the eigenvectors $u$ are always unit vectors, i.e., $\norm{u}=1$. 
We define the spectral error as
\begin{equation}
    \label{eq: eps v}
     \varepsilon_k 
     \coloneqq \varepsilon(v^k)
     \coloneqq \lambda_1 - \rho_k 
     = \lambda_1 - \norm{A v^k}^2
     = \max_{\norm{v}=1} \norm{A v}^2 - \norm{A v^k}^2
\end{equation}
and the residual at $k$th iteration as
\begin{equation}
    \label{eq: res}
     r^k 
     \coloneqq 
     r(v^k)
     \coloneqq A^*Av^k - \rho_k v^k.
\end{equation}
The main results of \cite{bresch2025stochastic} provide the following convergence guarantees for $\varepsilon_k$ and $r^k$.

\begin{theorem}[{\cite[Thm.~4.5,~5.6 \& Lem.~5.7 \& Thm.~6.3]{bresch2025stochastic}}] 
    \label{thm: sublinear}
    Let 
    \begin{equation}\label{eq: dimension}
        d_{\max} \coloneqq \dim(\{v \in \RR^d : A^*A v = \lambda_1 v\})
    \end{equation}
    be the dimension of the eigenspace corresponding to the leading eigenvalue. Consider the iterates $(v^k)_{k\ge 0}$ generated by iteration \eqref{eq: plane} with $v^0$ sampled uniformly at random on $\sphere^{d-1}$. If $d_{\max} = d$ or $d_{\max} = d-1$, then $v^0$ or, respectively, $v^1$ is the leading eigenvector of $A^* A$ a.s. 
    Otherwise, if $d_{\max} < d-1$, we have  
    \[
    \varepsilon_k \to 0, \quad \text{as} \ k \to \infty, \ \text{a.s.}
    \quad \text{and} \quad 
    \min_{k = 0, \ldots, K}\EE[\norm{r^k}^2] \le \frac{32 (d-1) \norm{A}^4}{K+1}.
    \]
    Furthermore, if $d_{\max} = 1$, then $v^k \to u_1$ as $k \to \infty$ a.s., where $u_1$ is the unique (up to a sign) unit eigenvector of $A^* A$ corresponding to $\lambda_1$. 
\end{theorem}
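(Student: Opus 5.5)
We would prove the statement by treating its cases separately, relying only on the structure of the subproblem \eqref{eq: plane}, the monotonicity \eqref{eq: nondecreasing}, and the fact that $V^0$ and the $X^k$ have absolutely continuous laws on their spheres.

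\emph{Degenerate cases.} If $d_{\max}=d$, then $A^*A=\lambda_1 I_d$, so every unit vector, and in particular $v^0$, is a leading eigenvector. If $d_{\max}=d-1$, the leading eigenspace $E_1$ is a hyperplane with unit normal $u_d$. Almost surely $v^0\notin E_1\cup E_1^\perp$, since both sets are null for $\sigma_{\sphere^{d-1}}$. The tangent space $T_{v^0}$ is $(d-1)$-dimensional, and $T_{v^0}\cap E_1$ has dimension $d-2$ because $v^0\notin E_1^\perp$. Hence $\operatorname{span}\{v^0,x^0\}$ meets $E_1$ nontrivially whenever $x^0$ is not orthogonal to $u_d$ within the relevant component. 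More directly: every $2$-plane in $\RR^d$ meets the hyperplane $E_1$ in at least a line. The maximum of $\|Az\|^2$ over the unit circle of the plane is therefore $\lambda_1$, and it is attained in $E_1$. It is attained only there, because any $z$ with a nonzero $u_d$-component has $\|Az\|^2<\lambda_1$. So $v^1\in E_1$ deterministically.

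\emph{Almost sure convergence for $d_{\max}<d-1$.} The sequence $\rho_k$ is monotone and bounded by $\lambda_1$, so it converges a.s. to some limit $\rho_\infty$. The aim is to show $\rho_\infty=\lambda_1$.

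1. \emph{Per-step improvement bound.} Computing the leading eigenvalue of $H_k$ gives $\rho_{k+1}-\rho_k\ge a_k^2/(\|A\|^2)$ up to a constant, using $\rho_{k+1}-\rho_k=\tfrac12\bigl(s_k-\rho_k+\sqrt{(s_k-\rho_k)^2+4a_k^2}\bigr)$ and $|s_k-\rho_k|\le\|A\|^2$.

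2. \emph{Conditional expectation of $a_k^2$.} We have $a_k=\langle A^*Av^k,x^k\rangle=\langle r^k,x^k\rangle$, since $x^k\perp v^k$. Because $x^k$ is uniform on the unit sphere of $T_{v^k}$ and $r^k\in T_{v^k}$, it follows that $\EE[a_k^2\mid v^k]=\|r^k\|^2/(d-1)$.

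3. \emph{Telescoping.} Summing the resulting bound $\EE[\rho_{k+1}-\rho_k]\ge c\,\EE\|r^k\|^2/((d-1)\|A\|^2)$ gives $\sum_k\EE\|r^k\|^2\le C(d-1)\|A\|^4$. This yields the stated $\min_k$ rate (with constant $32$ once $c$ is tracked), and also $r^k\to0$ a.s.

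4. \emph{Ruling out spurious limits.} With $r^k\to 0$, the iterates accumulate only on eigenvectors, so $\rho_\infty$ is some eigenvalue $\lambda_j$. The main obstacle is excluding $\lambda_j<\lambda_1$. The argument would be that near a non-leading eigenvector $u_j$, a random plane contains, with probability bounded below, a direction with positive component along $u_1$. The resulting jump in $\rho$ then has a uniform lower bound. Concretely, decompose $v^k=\alpha u_1+w$: if $\alpha\neq0$ and $\rho_k\le\lambda_1-\delta$, then $\EE[\rho_{k+1}-\rho_k\mid v^k]\ge c(\delta,\alpha)>0$. This would contradict convergence unless $|\alpha|\to0$. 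One must also show that the $u_1$-component does not vanish along the trajectory. It is nonzero a.s. at $k=0$ and is preserved a.s. because the updates are generic. The hardest part is making this quantitative, and we would try a Borel--Cantelli argument on the events where $\langle x^k,u_1\rangle^2\ge\|P_{T_{v^k}}u_1\|^2/(d-1)$.

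\emph{Case $d_{\max}=1$.} Since $\varepsilon_k\to0$ and the gap $\lambda_1-\lambda_2>0$, we get $\|v^k-(\pm u_1)\|\to0$ via $\varepsilon_k\ge(\lambda_1-\lambda_2)(1-\langle v^k,u_1\rangle^2)$. The sign is eventually fixed because the step sizes satisfy $\|v^{k+1}-v^k\|\to0$: near $u_1$ we have $|\tau^k|\lesssim |a_k|/(\lambda_1-s_k)\to0$.
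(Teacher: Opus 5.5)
Your degenerate cases, Steps~1--3, and the sign-stabilization argument for $d_{\max}=1$ are fine. Steps~1--3 amount to summing Lem.~\ref{lem: ascent}; tracked carefully, they give $(d-1)\Delta^2\le(d-1)\|A\|^4$ in place of $32(d-1)\|A\|^4$.

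The genuine gap is Step~4, and both $\varepsilon_k\to0$ and $v^k\to\pm u_1$ rest on it. Your bound $\EE[\rho_{k+1}-\rho_k\mid v^k]\ge c(\delta,\alpha)$ is in effect $\|r^k\|^2/((d-1)\Delta)\ge(\lambda_1-\rho_k)^2\alpha_k^2/((d-1)\Delta)$, so it adds nothing to Step~3. Conversely, along a trajectory with $\rho_k\to\lambda_j<\lambda_1$ and $r^k\to0$, one has $\alpha_k^2\le\|r^k\|^2/(\lambda_1-\rho_k)^2\to0$. So ``contradiction unless $|\alpha_k|\to0$'' rules out nothing, and showing $\alpha_k\neq0$ at each finite $k$ cannot prevent $\alpha_k\to0$. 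This is power-method intuition. Here the $u_1$-content is injected afresh by $x^k$ at every step, so $\alpha_k$ is irrelevant.

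Your Borel--Cantelli event is also too weak. Take $v^k=u_2$ with $\lambda_2<\lambda_1$, and $x^k=\sqrt{c}\,u_1+\sqrt{1-c}\,u_d$ with $c=1/(d-1)$. The event holds, yet $a_k=\langle r^k,x^k\rangle=0$. Moreover $s_k=c\lambda_1+(1-c)\lambda_d$ can lie below $\lambda_2$ (e.g.\ $\lambda_1=1$, $\lambda_2=0.9$, $\lambda_d=0$), so $\rho_{k+1}=\rho_k$. Near a non-leading eigenvector, a fixed fraction $1/(d-1)$ of $u_1$ in the plane only certifies $\varepsilon_{k+1}\lesssim\Delta\frac{d-2}{d-1}$, which need not be below $\lambda_1-\lambda_j$.

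What you need is a one-step probability bound that is uniform in $v^k$, using a cap whose size is tied to the target accuracy. This is exactly the proof of Prop.~\ref{eq: global T} with $\eta\delta$ replaced by an arbitrary $\epsilon\in(0,\Delta)$.

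Fix any unit leading eigenvector $u_1$. When $\varepsilon_k>\epsilon$, put $w^k=P_{T_{v^k}}u_1/\|P_{T_{v^k}}u_1\|$. On the event $\{\langle x^k,w^k\rangle^2\ge1-\epsilon/\Delta\}$, the plane contains a unit $z$ with $\langle z,u_1\rangle^2\ge1-\epsilon/\Delta$. Hence $\varepsilon_{k+1}\le\varepsilon(z)\le\Delta(1-\langle z,u_1\rangle^2)\le\epsilon$. This event has conditional probability $p_\epsilon=\mathrm{I}_{\epsilon/\Delta}\bigl(\tfrac{d-2}{2},\tfrac12\bigr)>0$ (note $d\ge3$ because $d_{\max}<d-1$). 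That probability is independent of $v^k$, and in particular of $\alpha_k$.

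Combined with monotonicity of $\varepsilon_k$, this gives $\PP(\varepsilon_k>\epsilon)\le(1-p_\epsilon)^k$ for every $\epsilon>0$, hence $\varepsilon_k\to0$ a.s. The argument needs no spectral gap, no identification of limit points, and no use of $r^k\to0$. Replacing your Step~4 by it repairs the proof, including the $d_{\max}=1$ part.
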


\subsection{Main Contributions}
\label{sec: contributions}

While Thm.~\ref{thm: sublinear} establishes almost-sure convergence of the spectral error $\varepsilon_k$ and the iterates \eqref{eq: plane} and a sublinear bound for the expected squared residual, it does not quantify the decay of $\mathbb E[\varepsilon_k]$. 
In contrast, other algorithms for finding the leading eigenvector, such as the (randomized) power method \cite{vonMises1929power,kuczynski1992estimating,halko2011randomapproximatematrixdecomposition} or Riemannian gradient ascent \cite{alimisis2024geodesic,XuKeGao2017}, converge at a linear rate when the spectral gap is positive. 
Our main contribution lies in closing this gap and establishing global linear convergence guarantees in expectation under a positive spectral gap, summarized in the following main theorem.

\begin{theorem}[Main Theorem]
    \label{thm: contribution}
    Let $d\ge 3$ and suppose that the leading
    eigenvalue of $A^*A$ is simple with corresponding unit eigenvector $u_1$, 
    i.e., $d_{\max} = 1$ from \eqref{eq: dimension}, 
    with positive spectral gap and spectral width
    \begin{equation}
        \label{eq: constants}
        \delta \coloneqq \lambda_1-\lambda_2>0,
        \qquad
        \Delta \coloneqq \lambda_1-\lambda_d \geq \delta > 0.
    \end{equation}
    For the stochastic random-plane iteration \eqref{eq: plane}, we establish linear convergence rates
    \[
        \mathbb E[\varepsilon_k]
        \le C \varrho^k,
        \quad
        \EE\bigl[\|r^k\|^2\bigr] \leq C \Delta \varrho^k
        ,
        \quad \text{and} \quad
        \EE \biggl[\min_{s\in\{-1,1\}}\|v^k-su_1\|^2\biggr] \leq \frac{2C}{\delta} \varrho^k,
        \quad k\ge 0, 
    \]
    where $C>0$ and $\varrho\in(0,1)$ are explicit constants
    depending only on $d$, $\delta$, and $\Delta$
    given in \eqref{eq: q}, \eqref{eq: p} 
    and Thm.~\ref{thm: global}, respectively Thm.~\ref{thm: global-unequal}. 
\end{theorem}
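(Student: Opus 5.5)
The strategy is to prove a one-step conditional contraction of the form $\EE[\varepsilon_{k+1}\mid V^k=v^k]\le \varrho\,\varepsilon_k$, at least once $v^k$ is in a suitable region. Iterating this via the tower property gives the bound on $\EE[\varepsilon_k]$. The other two bounds then follow from deterministic inequalities:
\begin{itemize}
\item For the residual, expand $v^k=\sum_i c_i u_i$. Then $\|r^k\|^2=\sum_i c_i^2(\lambda_i-\rho_k)^2$ and $\varepsilon_k=\sum_i c_i^2(\lambda_1-\lambda_i)$. Since $|\lambda_i-\rho_k|\le\Delta$ and $\sum_i c_i^2(\lambda_i-\rho_k)^2=\sum_i c_i^2(\lambda_1-\lambda_i)^2-\varepsilon_k^2$, we get $\|r^k\|^2\le\Delta\varepsilon_k$.
\item For the iterates, $\varepsilon_k\ge\delta(1-c_1^2)$ and $\min_s\|v^k-su_1\|^2=2(1-|c_1|)\le 2(1-c_1^2)\le 2\varepsilon_k/\delta$.
\end{itemize}

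The one-step bound comes from a comparison step. Since $v^{k+1}$ maximizes over the plane, $\rho_{k+1}\ge\|A(\cos\theta\, v^k+\sin\theta\, x^k)\|^2$ for every fixed $\theta$. Equivalently, the increment equals the excess of the top eigenvalue of $H_k$ over $\rho_k$. Explicitly,
\[
\rho_{k+1}-\rho_k=\tfrac{s_k-\rho_k}{2}+\sqrt{\tfrac{(s_k-\rho_k)^2}{4}+a_k^2}\ \ge\ \frac{a_k^2}{\rho_k-s_k+\sqrt{(\rho_k-s_k)^2+4a_k^2}}\cdot 2 \ \ge\ \frac{a_k^2}{\Delta'}
\]
for a denominator bounded by a multiple of $\Delta$ (using $|\rho_k-s_k|\le\Delta$ and $|a_k|\le\|r^k\|$). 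Here $a_k=\langle r^k,x^k\rangle$ because $x^k\perp v^k$. Since $x^k$ is uniform on the unit sphere of $T_{v^k}$ and $r^k\in T_{v^k}$, we have $\EE[a_k^2\mid v^k]=\|r^k\|^2/(d-1)$. This yields $\EE[\rho_{k+1}-\rho_k\mid v^k]\ge c\|r^k\|^2/((d-1)\Delta)$.

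The crux is a lower "PL-type" bound $\|r^k\|^2\ge \mu\,\varepsilon_k$. This is false globally, because eigenvectors $u_i$ with $i\ge2$ have zero residual and positive error. Locally it holds: with $w_i=c_i^2$,
\[
\|r\|^2=\sum_i w_i(\lambda_i-\rho)^2\ \ge\ w_1\varepsilon^2+\sum_{i\ge2}w_i(\lambda_1-\lambda_i-\varepsilon)^2 .
\]
Whenever $\varepsilon\le\delta/2$ (so $\rho$ lies above $\lambda_2+\delta/2$), each term satisfies $(\lambda_1-\lambda_i-\varepsilon)\ge(\lambda_1-\lambda_i)/2$ for $i\ge2$. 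This gives $\|r\|^2\ge\frac{\delta}{4}\sum_{i\ge2}w_i(\lambda_1-\lambda_i)=\frac{\delta}{4}\varepsilon$. Hence, in the region $\{\varepsilon_k\le\delta/2\}$, which is absorbing by the monotonicity \eqref{eq: nondecreasing},
\[
\EE[\varepsilon_{k+1}\mid v^k]\le\Bigl(1-\tfrac{c\delta}{(d-1)\Delta}\Bigr)\varepsilon_k .
\]

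The main obstacle is globalizing this, since we must bound the expected time to enter the region. I would try two things:
\begin{itemize}
\item \textbf{Direct bound on $1-c_1^2$.} Show a uniform one-step decrease in expectation of a potential such as $1-c_1^2$ or $\log c_1^2$, using the rotational symmetry of $x^k$: the component of $x^k$ along $u_1$ has second moment $(1-c_1^2)/(d-1)$. Away from the region, the plane maximization gains a $\Theta(\delta/d)$ fraction of this.
\item \textbf{Geometric tail fallback.} Otherwise, lower bound the probability $p$ that a step lands in the region from any $v^k$. This uses $c_1\ne0$ a.s. and the fact that the plane contains a point close to $u_1$ with positive probability, though that probability depends on $c_1$ and is delicate. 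Then write $\EE[\varepsilon_k]\le \Delta\,\PP(\text{not entered by }k/2)+\Delta\varrho_{\mathrm{loc}}^{k/2}$, which gives constants $C,\varrho$ as in \eqref{eq: q}, \eqref{eq: p}.
\end{itemize}
I expect this entry-time estimate, uniform over bad starting points near lower eigenvectors, to be the hardest part. I would first attempt it through the expected growth of $c_1^2$, whose multiplicative increase per step should be at least $1+c\delta/(d\Delta)$.
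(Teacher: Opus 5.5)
Your deterministic bounds (residual, and distance to $\pm u_1$) and your local contraction on $\{\varepsilon_k\le\delta/2\}$ are correct up to constants. The paper gets sharper constants from two facts: $a_k^2\le\Delta(\rho_{k+1}-\rho_k)$, which follows directly from $a_k^2=(\rho_{k+1}-\rho_k)(\rho_{k+1}-s_k)$, and $\|r^k\|^2\ge\varepsilon_k(\delta-\varepsilon_k)$. Also, for $\|r^k\|^2\le\Delta\varepsilon_k$ the fact you need is $0\le\lambda_1-\lambda_i\le\Delta$, not $|\lambda_i-\rho_k|\le\Delta$.

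The genuine gap is the globalization, which you leave open, and your preferred route for it fails. A uniform expected multiplicative growth of $c_1^2$ (equivalently, a drift of $1-c_1^2$ or $\log c_1^2$) is false. Take $\lambda_1=1$, $\lambda_2=\dots=\lambda_{d-1}=1-\delta$ with $\delta<\tfrac12$, $\lambda_d=0$, and $v^k=(u_1+u_d)/\sqrt2$, so that $c_1^2=\rho_k=\tfrac12$. With $y\coloneqq\langle x^k,(u_1-u_d)/\sqrt2\rangle$ one finds $a_k=y/2$ and $s_k-\rho_k=(\tfrac12-\delta)(1-y^2)$. Since $\EE[y^2]=1/(d-1)$, the step $|\tau^k|\ge(s_k-\rho_k)/|a_k|$ is typically of order $\sqrt d$. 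Hence $v^{k+1}$ is essentially $\pm x^k$, and $\EE[c_1(v^{k+1})^2\mid V^k=v^k]=O(1/d)\ll\tfrac12$. The plane maximization chases the cluster at $\lambda_2$, not $u_1$, so $c_1^2$ is not a Lyapunov function.

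Your fallback is exactly the paper's route, but its key step is missing. The obstacle you anticipate (a delicate dependence on $c_1$, and the need for $c_1\ne0$) does not exist. Set $\alpha_k=\langle v^k,u_1\rangle$ and $w^k\coloneqq(u_1-\alpha_kv^k)/\sqrt{1-\alpha_k^2}\in T_{v^k}\cap\sphere^{d-1}$. The orthogonal projection of $u_1$ onto $\operatorname{span}\{v^k,x^k\}$ has squared norm $\alpha_k^2+(1-\alpha_k^2)\langle x^k,w^k\rangle^2\ge\langle x^k,w^k\rangle^2$. So whenever $\langle x^k,w^k\rangle^2\ge1-\eta\delta/\Delta$, the normalized projection $z$ satisfies $\varepsilon(z)\le\Delta(1-\langle z,u_1\rangle^2)\le\eta\delta$, and by optimality $\varepsilon_{k+1}\le\eta\delta$.

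That event is a pair of antipodal caps on $T_{v^k}\cap\sphere^{d-1}\cong\sphere^{d-2}$. Its probability is $p=\mathrm I_{\eta\delta/\Delta}(\tfrac{d-2}{2},\tfrac12)\ge\frac{1}{2\sqrt{\pi d}}(\eta\delta/\Delta)^{(d-2)/2}$ for every $v^k\notin\mathcal E_{\eta,\delta}$. The bound is uniform in $\alpha_k$ (larger $|\alpha_k|$ only helps), and at $\alpha_k=0$ one simply has $w^k=u_1$. Conditioning on $V^k$ gives $\PP(T_{\eta,\delta}>k)\le(1-p)^k$. Since $\mathcal E_{\eta,\delta}$ is absorbing, your split then yields $\EE[\varepsilon_k]\le\Delta(1-p)^j+\eta\delta q^{k-j}$, and hence the theorem. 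Without this uniform cap bound, your argument establishes only local linear convergence.
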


Thus, under the spectral-gap assumption, Thm.~\ref{thm: contribution} provides a quantitative linear rate for spectral optimality as well as stronger residual convergence guarantees.

The proof combines two estimates:
First, for any $\eta\in(0,1)$, 
we establish local expected contraction 
\[
    \mathbb E_{V^{k+1} \mid V^k}[\varepsilon_{k+1}]
    \le
    \left(1-\frac{(1-\eta)\delta}{(d-1)\Delta}\right)
    \varepsilon_k
\]
in the neighborhood of global maximum described by the set $\{\varepsilon_k\le\eta\delta\}$ with $0< \eta < 1$.
Second, we derive a geometric tail bound for the first
entrance time into this region. Combining these estimates
yields the global rate from iteration zero,
see Thm.~\ref{thm: local},~\ref{thm: global}, and~\ref{thm: global-unequal}. 
Then, we derive convergence rates for the squared residual norm $r^k$ and the squared distance to the leading eigenspace by bounding these values in terms of $\varepsilon_k$ in Cor.~\ref{eq: other-errors}.

\section{Linear Convergence under a Spectral Gap}
\label{sec:linear_convergence}

Throughout the remainder of the paper, we rely on assumptions of Thm.~\ref{thm: contribution}. The case $d \le 2$ is excluded, since by \Cref{thm: sublinear}, the algorithm will terminate after at most one iteration a.s.

Since we are interested in a linear rate of 
\begin{equation*}
    0\leq \varepsilon_{k+1} \leq \varepsilon_k\leq \Delta
    \quad \forall k \in \mathbb N
    \qquad \text{a.s.}
\end{equation*}    
by \eqref{eq: nondecreasing} and \eqref{eq: eps v}, we first consider the local linear convergence in the neighborhood of the solution in \Cref{sec: qualitative ascent bounds} and then study the probability of entering it in \Cref{sec: linear convergence rate}.

\subsection{Ascent Bounds and Local Contraction}
\label{sec: qualitative ascent bounds}

In this subsection we first derive qualitative bounds on the expected error. We start with an analogue of sufficient decrease condition of $\varepsilon_k$ in terms of $\norm{r^k}^2$.

\begin{lemma}
    \label{lem: ascent}
    For every $k\geq0$ it holds 
    \begin{equation*}
        \label{eq: expected-ascent}
        \EE_{x^{k} \sim X^{k}\mid V^k = v^k}[\varepsilon_{k+1}]
        \leq \varepsilon_k-\frac{\|r^k\|^2}{(d-1)\Delta}.
    \end{equation*}
\end{lemma}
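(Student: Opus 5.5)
Our plan is to bound the one-step gain $\rho_{k+1}-\rho_k$ from below by an explicit function of $a_k$ and $s_k-\rho_k$, and then take the expectation over $x^k$. Since $\rho_{k+1}$ is the largest eigenvalue of the $2\times 2$ matrix $H_k$ in \eqref{eq: coefs}, we have the closed form
\[
\rho_{k+1}-\rho_k=\tfrac{s_k-\rho_k}{2}+\sqrt{\tfrac{(s_k-\rho_k)^2}{4}+a_k^2}
=\frac{a_k^2}{\sqrt{\tfrac{(s_k-\rho_k)^2}{4}+a_k^2}-\tfrac{s_k-\rho_k}{2}} .
\]
We bound the denominator from above. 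Both $\rho_k$ and $s_k$ lie in $[\lambda_d,\lambda_1]$, so $\abs{s_k-\rho_k}\le\Delta$. For $a_k$ we use $a_k=\scp{A^*Av^k}{x^k}=\scp{r^k}{x^k}$, which holds because $x^k\perp v^k$. Then $\abs{a_k}=\abs{\scp{(A^*A-\rho_k I)v^k}{x^k}}$, and we want $\abs{a_k}\le\Delta/2$ or something comparable. Here is a cleaner route. The $2\times2$ matrix $H_k$ is a compression of $A^*A$, so its eigenvalues $\mu_1\ge\mu_2$ lie in $[\lambda_d,\lambda_1]$, and hence $\mu_1-\mu_2\le\Delta$. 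The denominator above equals $\mu_1-\rho_k$. Writing $\rho_{k+1}-\rho_k=\mu_1-\rho_k$, we get
\[
(\mu_1-\rho_k)^2\ge(\mu_1-\rho_k)(\rho_k-\mu_2)=a_k^2-(\ldots),
\]
which is messy, so we instead use the direct identity $(\mu_1-\rho_k)(\rho_k-\mu_2)=a_k^2$. This follows from the characteristic polynomial evaluated at $\rho_k$: $\det(H_k-\rho_k I)=-a_k^2$. Since $\rho_k-\mu_2\le\mu_1-\mu_2\le\Delta$, we obtain the key deterministic bound
\[
\rho_{k+1}-\rho_k=\mu_1-\rho_k\ge\frac{a_k^2}{\Delta}.
\]
This also covers the case $a_k=0$ trivially. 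If $\rho_k=\mu_2$, then $a_k=0$ as well, so there is no division issue.

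Next we take the conditional expectation. Given $V^k=v^k$, $x^k$ is uniform on the unit sphere of the $(d-1)$-dimensional space $T_{v^k}$, and $r^k\in T_{v^k}$ because $\scp{r^k}{v^k}=\rho_k-\rho_k=0$. By rotational invariance,
\[
\EE\bigl[\scp{r^k}{x^k}^2\bigr]=\frac{\norm{r^k}^2}{d-1}.
\]
This follows since $\EE[x^k(x^k)^*]=P_{T_{v^k}}/(d-1)$, which can be checked by taking the trace. Combining this with the deterministic bound and using $\varepsilon_{k+1}=\varepsilon_k-(\rho_{k+1}-\rho_k)$ gives the claim.

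The main obstacle is the deterministic gain bound. The naive estimate via the explicit formula requires controlling $\abs{a_k}$ in the denominator. The key is to use the interlacing and compression argument, $\mu_1-\mu_2\le\Delta$, together with the identity $\det(H_k-\rho_kI)=-a_k^2$. The first step is to verify that the eigenvalues of the compression $H_k=Q^*A^*AQ$ with $Q=[v^k\;x^k]$ lie in $[\lambda_d,\lambda_1]$, which is a Rayleigh-quotient argument.
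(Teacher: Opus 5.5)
Your proof is correct and is essentially the paper's argument. You evaluate the characteristic polynomial at $\rho_k$ to get $a_k^2=(\mu_1-\rho_k)(\rho_k-\mu_2)$; the paper evaluates it at $\rho_{k+1}=\mu_1$ to get $a_k^2=(\rho_{k+1}-\rho_k)(\rho_{k+1}-s_k)$. These are the same identity, since $\mu_1+\mu_2=\rho_k+s_k$. The bound of the second factor by $\Delta$ and the expectation step via $\EE[x^k(x^k)^*]=P_{T_{v^k}}/(d-1)$ also match the paper. One minor slip in the route you abandoned: the denominator in your closed form equals $\rho_k-\mu_2$, not $\mu_1-\rho_k$.
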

\begin{proof}
    Utilizing \eqref{eq: coefs}
    and characteristic equation 
    gives
    \begin{align*}
        0 = \det(H_k - \rho_{k+1} I_2)
        &= (\rho_{k+1}-\rho_k)(\rho_{k+1}-s_k) - a_k^2 
        \; 
        \Leftrightarrow
        \; 
        a_k^2
        = (\rho_{k+1}-\rho_k)(\rho_{k+1}-s_k).
    \end{align*}
    Both $\rho_{k+1}$ and $s_k$ belong to $[\lambda_d,\lambda_1]$,
    and $\rho_{k+1}\geq\max\{\rho_k,s_k\}$. 
    Consequently, by \eqref{eq: eps v}, we get
    \begin{equation}
        \label{eq: pathwise-ascent}
        a_k^2
        \leq \Delta (\rho_{k+1}-\rho_k)
        = \Delta (\varepsilon_k-\varepsilon_{k+1})
        \quad \Leftrightarrow \quad 
        \varepsilon_{k+1} \leq \varepsilon_k - \frac{a_{k}^2}{\Delta}.
    \end{equation}
    Now, we transform $a_k^2$. By \eqref{eq: coefs}, we have
    \begin{align}
        \label{eq: ak}
        a_k 
        = \scp{A^*A v^k}{x^k} 
        = \scp[\Bigl]{\underbrace{A^*A v^k - \rho_k v^k}_{= r^k}}{x^k} + \rho_k \underbrace{\scp{v^k}{x^k}}_{= 0}
        = \scp{r^k}{x^k},
    \end{align}
    since $x^k \perp v^k$ by construction in \eqref{eq: construction}. Next, \cite[Lem.~4.1]{bresch2025stochastic} gives
    \begin{equation}
        \label{eq: E xx}
        \EE_{x^k \sim X^k \mid V^k = v^k}[x^k(x^k)^*] 
        = \tfrac{1}{d-1}[I_d-v^k(v^k)^*],
    \end{equation}
    which yields
    \begin{align}
        \label{eq: exp ak}
        \EE_{x^k \sim X^k \mid V^k = v^k}[a_k^2]
        &\overset{\eqref{eq: ak}}{=} \EE_{x^k \sim X^k \mid V^k = v^k}[\scp{r^k}{x^k}^2] 
        = \EE_{x^k \sim X^k \mid V^k = v^k}[(r^k)^*x^k (x^k)^* r^k] \notag\\
        &= (r^k)^* \, \EE_{x^k \sim X^k \mid V^k = v^k}[x^k (x^k)^*] \, r^k 
        = \tfrac{1}{d-1} (r^k)^* [I_d - v^k (v^k)^*] r^k \notag\\
        &= \tfrac{1}{d-1} (\|r^k\|^2 - \scp{v^k}{r^k}^2).
    \end{align}
    Finally, we notice that \eqref{eq: res} implies
    \begin{equation}
        \label{eq: r orth}
        \scp{r^k}{v^k}
        = \scp{A^*A v^k - \rho_k v^k}{v^k}
        = \underbracket{\norm{A v^k}^2}_{= \rho_k} - \rho_k \underbracket{\norm{v^k}^2}_{= 1}
        = 0.
    \end{equation}
    Consequently, taking the conditional expectation in  \eqref{eq: pathwise-ascent} and combining it with \eqref{eq: exp ak} and \eqref{eq: r orth} proves the assertion.
\end{proof}

Next, we relate $\norm{r^k}^2$ to the errors $\varepsilon_k$.

\begin{lemma}
    \label{lem: residual}
    For fixed $v^k \in \sphere^{d-1}$,
    it holds
    \begin{equation*}
        \label{eq: residual-bound}
        \varepsilon_k(\delta-\varepsilon_k) 
        \le \norm{r^k}^2
        = \|A^*A v^k - \rho_k v^k\|^2 
        \le \varepsilon_k(\Delta - \varepsilon_k)
        \leq \Delta \varepsilon_k.
    \end{equation*}
\end{lemma}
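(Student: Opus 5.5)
We will prove the statement by expanding $v^k$ in an orthonormal eigenbasis $u_1,\dots,u_d$ of $A^*A$. Write $v^k=\sum_{i=1}^d c_i u_i$ with weights $w_i\coloneqq c_i^2\ge 0$ and $\sum_i w_i=1$. Then $\rho_k=\sum_i w_i\lambda_i$ is a convex combination of the eigenvalues. Since $r^k=\sum_i c_i(\lambda_i-\rho_k)u_i$, Parseval gives
\[
\norm{r^k}^2=\sum_{i=1}^d w_i(\lambda_i-\rho_k)^2 .
\]
This is the variance of a random variable $\Lambda$ that takes the value $\lambda_i$ with probability $w_i$, and whose mean is $\rho_k$. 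Both bounds then reduce to elementary variance estimates for a distribution supported on $[\lambda_d,\lambda_1]$.

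\emph{Upper bound.} Every $\lambda_i$ lies in $[\lambda_d,\lambda_1]$, so $(\lambda_1-\lambda_i)(\lambda_i-\lambda_d)\ge 0$. Taking the $w$-weighted average of this inequality gives
\[
\sum_i w_i(\lambda_i-\rho_k)^2\le(\lambda_1-\rho_k)(\rho_k-\lambda_d).
\]
This is the Bhatia--Davis inequality, and it follows from expanding the product and using $\sum_i w_i\lambda_i=\rho_k$. Substituting $\lambda_1-\rho_k=\varepsilon_k$ and $\rho_k-\lambda_d=\Delta-\varepsilon_k$ yields $\norm{r^k}^2\le\varepsilon_k(\Delta-\varepsilon_k)$. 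Since $\varepsilon_k\ge 0$, we also get $\varepsilon_k(\Delta-\varepsilon_k)\le\Delta\varepsilon_k$.

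\emph{Lower bound.} If $\varepsilon_k\ge\delta$, the left-hand side $\varepsilon_k(\delta-\varepsilon_k)$ is nonpositive and there is nothing to prove. So we may assume $\varepsilon_k<\delta$, which means $\rho_k>\lambda_2$. In this case every $i\ge2$ satisfies $\rho_k-\lambda_i\ge\rho_k-\lambda_2=\delta-\varepsilon_k>0$. We keep the $i=1$ term and bound the others from below:
\[
\norm{r^k}^2\ge w_1\varepsilon_k^2+\sum_{i\ge2}w_i(\rho_k-\lambda_i)(\delta-\varepsilon_k).
\]
The mean identity $\sum_i w_i(\lambda_i-\rho_k)=0$ gives $\sum_{i\ge2}w_i(\rho_k-\lambda_i)=w_1\varepsilon_k$. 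Hence
\[
\norm{r^k}^2\ge w_1\varepsilon_k^2+w_1\varepsilon_k(\delta-\varepsilon_k)=w_1\varepsilon_k\delta .
\]
It remains to show $w_1\delta\ge\delta-\varepsilon_k$. This holds because $\varepsilon_k=\sum_{i\ge2}w_i(\lambda_1-\lambda_i)\ge(1-w_1)\delta$, so $w_1\delta\ge\delta-\varepsilon_k$. Combining, $\norm{r^k}^2\ge\varepsilon_k(\delta-\varepsilon_k)$.

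The main obstacle is the lower bound: a naive estimate loses the $w_1$ factor. Two ingredients resolve it. First, the mean identity converts the tail sum $\sum_{i\ge 2}w_i(\rho_k-\lambda_i)$ into $w_1\varepsilon_k$. Second, the estimate $w_1\ge 1-\varepsilon_k/\delta$ supplies the missing factor. The case split at $\varepsilon_k=\delta$ is needed so that the factors $\rho_k-\lambda_i$ for $i\ge2$ are nonnegative.
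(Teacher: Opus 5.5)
Your proof is correct, and your upper bound is the paper's argument in different clothing. The paper sets $\delta_i \coloneqq \lambda_1-\lambda_i$ and derives $\norm{r^k}^2=\sum_i \delta_i^2(\gamma_i^k)^2-\varepsilon_k^2$ together with $\varepsilon_k=\sum_i\delta_i(\gamma_i^k)^2$. It then uses $\delta_i^2\le\Delta\delta_i$, which is exactly your pointwise inequality $(\lambda_1-\lambda_i)(\lambda_i-\lambda_d)\ge0$.

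The lower bound is where you genuinely diverge. The paper needs no case split, no mean identity and no estimate on $w_1$. It uses the mirror-image pointwise inequality $\delta_i^2\ge\delta\,\delta_i$, i.e.\ $(\lambda_1-\lambda_i)(\lambda_2-\lambda_i)\ge0$ for every $i$. This is zero for $i=1$ and a product of two nonnegative factors for $i\ge2$. Averaging it gives $\sum_i\delta_i^2(\gamma_i^k)^2\ge\delta\varepsilon_k$, hence $\norm{r^k}^2\ge\varepsilon_k(\delta-\varepsilon_k)$ in one line, for every value of $\varepsilon_k$. In your variance language this is the reverse counterpart of Bhatia--Davis. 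Since the distribution puts no mass in the open interval $(\lambda_2,\lambda_1)$, its variance is at least $(\lambda_1-\rho_k)(\rho_k-\lambda_2)$. So the lower bound is no harder than the upper bound, and calling it ``the main obstacle'' reflects your chosen route rather than the problem.

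What your longer route does buy is a slightly sharper intermediate estimate, $\norm{r^k}^2\ge\delta\,\abs{\scp{v^k}{u_1}}^2\varepsilon_k$ on $\{\varepsilon_k<\delta\}$. It dominates $\varepsilon_k(\delta-\varepsilon_k)$ because $\abs{\scp{v^k}{u_1}}^2\ge1-\varepsilon_k/\delta$. On the neighborhood $\mathcal E_{\eta,\delta}$ used in Thm.~\ref{thm: local}, however, both bounds yield the same worst-case factor $(1-\eta)\delta$ and hence the same contraction $q$.
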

\begin{proof}
    Let $(u_i)_{i=1}^d$ 
    be an orthonormal eigenbasis of $A^*A$ 
    and write $v^k = \sum_{i=1}^d \gamma_i^k u_i$
    for some $\{\gamma_i^k\}_{i = 1}^d \subset \RR$. 
    Then 
    \begin{equation}
        \label{eq: delta gamma}
        1 = \norm{v^k}^2
        = \sum_{i = 1}^d (\gamma^k_i)^2 \norm{u_i}^2
        = \sum_{i = 1}^d (\gamma^k_i)^2.
    \end{equation}
    Set $\delta_i \coloneqq \lambda_1-\lambda_i$, so that $\delta_1 = 0$, 
    $\delta_2 = \delta$,
    and $\delta_i\geq\delta$ for $i\geq2$.
    We have $\varepsilon_k$ as 
    \begin{align}
        \label{eq: eps k}
        \varepsilon_k 
        &= \lambda_1 - (v^k)^* A^*A v^k 
        = \lambda_1 - (v^k)^*\sum_{i = 1}^d \lambda_i \gamma_i^k u_i 
        = \lambda_1 - \sum_{i,j = 1}^d \gamma_j^k \lambda_i \gamma_i^k \underbracket{(u_j)^* u_i}_{= \delta_{ij}} 
        \notag 
        \\[-11pt]
        &= \lambda_1 - \sum_{i = 1}^d (\gamma_i^k)^2 \lambda_i 
        \overset{\eqref{eq: delta gamma}}{=} \sum_{i = 1}^d (\gamma_i^k)^2 (\lambda_1 - \lambda_i)
        = \sum_{i = 1}^d \delta_i (\gamma_i^k)^2.
    \end{align} 
    On the other hand, we have
    \[
    r^k = A^*A v^k - \rho_k v^k = \sum_{i = 1}^d (\lambda_i \gamma_i^k - \rho_k \gamma_i^k) u_i,
    \]
    using that $\{u_i\}_{i = 1}^d$ is a basis. Therefore,
    \begin{align}
        \label{eq: rk}
        \norm{r^k}^2
        &= \sum_{i = 1}^d (\lambda_i - \rho_k)^2 (\gamma_i^k)^2 
        = \sum_{i = 1}^d (\lambda_1 - \rho_k - (\lambda_1 - \lambda_i))^2 (\gamma_i^k)^2 
        = \sum_{i = 1}^d (\varepsilon_k - \delta_i)^2 (\gamma_i^k)^2 
        \notag
        \\[-3pt]
        &= \sum_{i = 1}^d \delta_i^2 (\gamma_i^k)^2
        - 2 \varepsilon_k \sum_{i = 1}^d \delta_i (\gamma_i^k)^2 
        + \varepsilon_k^2 \sum_{i = 1}^d (\gamma_i^k)^2 
        \overset{\eqref{eq: delta gamma}, \eqref{eq: eps k}}{=} \sum_{i = 1}^d \delta_i^2 (\gamma_i^k)^2 - \varepsilon_k^2.
    \end{align}
    Finally,
    by using $\delta_1^2 = 0 = \delta \delta_1$ and $\delta_i \geq \delta > 0$ for $i\geq 2$, we obtain 
    $\delta_i^2 \geq \delta_i\delta \geq 0$, 
    and the assertion follows by bounding
    \begin{align*}
        \norm{r^k}^2
        &\overset{\eqref{eq: rk}}{=} \sum_{i=1}^d \delta_i^2 (\gamma_i^k)^2 - \varepsilon_k^2
        \geq \delta \sum_{i=1}^d \delta_i (\gamma^k_i)^2 - \varepsilon_k^2
        \overset{\eqref{eq: eps k}}{=} \varepsilon_k(\delta-\varepsilon_k).
    \end{align*}
    The right hand side, follows by using $\delta_i\leq \Delta$,
    \[
        \norm{r^k}^2
        \overset{\eqref{eq: rk}}{=} \sum_i \delta_i^2 (\gamma_i^k)^2-\varepsilon_k^2
        \leq \Delta \sum_{i = 1}^d \delta_i (\gamma_i^k)^2 - \varepsilon_k^2
        \overset{\eqref{eq: eps k}}{=} \varepsilon_k(\Delta - \varepsilon_k).
        \qedhere
    \]
\end{proof}

In the following we consider neighborhood of the global maximum, where errors satisfies $\varepsilon_k \leq \eta \delta$ for some $\eta \in (0,1)$. This condition is defined by the set 
\[
    v^k 
    \in 
    \mathcal E_{\eta,\delta} \coloneqq \{w \in \sphere^{d-1} 
            \mid \varepsilon(w) = \lambda_1 - \norm{A w}^2 \leq \eta \delta\}.
\]
Next, we show that within this set we obtain a linear convergence rate.

\begin{theorem}
    \label{thm: local}
    Let $\eta\in(0,1)$ 
    and define
    \begin{equation}
        \label{eq: q}
        q = q(\eta, \delta, \Delta) 
        \coloneqq 1-\frac{(1-\eta)\delta}{(d-1)\Delta}
        \in (0,1).
    \end{equation}
    Then, we have
    \begin{equation}
        \label{eq: contraction}
        \EE_{x^{k} \sim X^{k} \mid V^k = v^k}[\varepsilon_{k+1}]
        \leq  
        q \varepsilon_k,
        \quad \text{for} \quad 
        v^k \in \mathcal E_{\eta,\delta}.
    \end{equation}
    In particular, if $v^0 \in \mathcal E_{\eta,\delta}$ a.s.,
    then
    \begin{equation}
        \label{eq: local-rate}
        \EE[\varepsilon_{k}]
        \leq 
        q^k \EE[\varepsilon_0],
        \qquad\text{for every }k\geq0.
    \end{equation}
\end{theorem}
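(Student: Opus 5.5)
The plan is to combine Lemma~\ref{lem: ascent} with the lower bound of Lemma~\ref{lem: residual}. Fix $v^k\in\mathcal E_{\eta,\delta}$, so that $\varepsilon_k\le\eta\delta$ and hence $\delta-\varepsilon_k\ge(1-\eta)\delta>0$. Lemma~\ref{lem: residual} then gives $\norm{r^k}^2\ge\varepsilon_k(\delta-\varepsilon_k)\ge(1-\eta)\delta\,\varepsilon_k$. Inserting this into Lemma~\ref{lem: ascent} yields
\[
\EE_{x^k\sim X^k\mid V^k=v^k}[\varepsilon_{k+1}]
\le \varepsilon_k-\frac{(1-\eta)\delta}{(d-1)\Delta}\,\varepsilon_k
= q\,\varepsilon_k,
\]
which is \eqref{eq: contraction}.

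Next I check that $q\in(0,1)$. Since $\eta<1$ and $\delta>0$, we have $q<1$. Since $\delta\le\Delta$, $d\ge3$ and $1-\eta<1$, the subtracted term is at most $\tfrac1{2}$, so $q>0$.

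For \eqref{eq: local-rate}, the key point is that $\mathcal E_{\eta,\delta}$ is invariant along the iteration. By \eqref{eq: nondecreasing}, $\varepsilon_{k+1}\le\varepsilon_k$ almost surely, so $v^0\in\mathcal E_{\eta,\delta}$ a.s.\ implies $v^k\in\mathcal E_{\eta,\delta}$ a.s.\ for every $k$. Hence \eqref{eq: contraction} holds at $V^k$ almost surely.

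The Markov structure from the preliminaries lets me compute the expectation over $V^{k+1}$ given $V^k$ through the conditional law of $X^k$. Here $V^{k+1}$ is a measurable function of $(V^k,X^k)$, and $\varepsilon_{k+1}\in[0,\Delta]$ is bounded, so the tower property applies. This gives $\EE[\varepsilon_{k+1}]=\EE\bigl[\EE[\varepsilon_{k+1}\mid V^k]\bigr]\le q\,\EE[\varepsilon_k]$. Induction on $k$ then gives $\EE[\varepsilon_k]\le q^k\EE[\varepsilon_0]$.

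The only slightly delicate step is justifying this conditioning, i.e.\ that the conditional expectation over $X^k$ given $V^k=v^k$ matches the one used in Lemma~\ref{lem: ascent}. This follows from the setup in the preliminaries, since $\PP_{X^k\mid V^k=v^k}$ is uniform, and from the invariance of $\mathcal E_{\eta,\delta}$ shown above.
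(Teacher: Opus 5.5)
Your proposal is correct and follows the paper's proof essentially verbatim. You combine Lem.~\ref{lem: ascent} with the lower bound $\norm{r^k}^2\ge\varepsilon_k(\delta-\varepsilon_k)\ge(1-\eta)\delta\,\varepsilon_k$ from Lem.~\ref{lem: residual} on $\mathcal E_{\eta,\delta}$, use the monotonicity of $(\varepsilon_k)$ to keep the iterates in $\mathcal E_{\eta,\delta}$, and conclude by the tower property and induction; your explicit check that $q\in(0,1)$ (the subtracted term is at most $\tfrac{1}{d-1}\le\tfrac12$) is a small detail the paper leaves implicit.
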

\begin{proof}
    By Lem.~\ref{lem: ascent} and~\ref{lem: residual} 
    we obtain
    \[
        \EE_{x^{k} \sim X^{k} \mid V^k = v^k}[\varepsilon_{k+1}]
        \leq \varepsilon_k-\frac{\varepsilon_k(\delta-\varepsilon_k)}{(d-1)\Delta}.
    \]
    For $v^k \in \mathcal E_{\eta,\delta}$ we have
    $\delta-\varepsilon_k\geq(1-\eta)\delta$, 
    which proves~\eqref{eq: contraction}.
    Since $(\varepsilon_k)_{k\geq0}$ is nonincreasing, this event also holds for subsequent iterations. Equation \eqref{eq: local-rate} follows by induction and the tower property,
    \[
        \EE[\varepsilon_{k+1}]
        = \EE[ \EE_{x^{k} \sim X^{k} \mid V^k = v^k}[\varepsilon_{k+1}] ]
        \leq 
        q \EE[\varepsilon_{k}] \le \ldots \le q^{k+1} \EE[\varepsilon_0].
        \qedhere
    \]
\end{proof}

Choosing $\eta=\tfrac12$ gives the conditional contraction factor
\[
    q = q(\nicefrac12, \delta,\Delta)
    = 1-\frac{\lambda_1-\lambda_2}{2(d-1)(\lambda_1-\lambda_d)}
\]
once the iterates $v^k$ satisfy $\|Av^k\|^2\geq(\lambda_1+\lambda_2)/2$.
The relevant gap is therefore the gap of $A^*A$, normalized by its spectral width. 
Since $\Delta\leq\lambda_1$, a simpler, possibly weaker factor is 
\[
    1-\frac{\lambda_1-\lambda_2}{2(d-1)\lambda_1}
    \geq 
    q(\nicefrac12,\delta,\Delta).
\]
These are upper bounds on the expected error, and do not assert a deterministic contraction at each realization of a random step. Moreover, the obtained rate is tight up to the multiplier $1-\eta$, as indicated by the next example.

\begin{example}\label{example}
Let $u_1 \in \sphere^{d-1}$, $0 < \delta < \lambda_1$ be arbitrary.   
Consider a matrix $A$ given by
\begin{equation*}
    A = \sqrt{\lambda_1 - \delta} I + (\sqrt{\lambda_1} - \sqrt{\lambda_1-\delta})u_1 u_1^*,
    \quad 
    \text{with}
    \quad 
    A^* A = (\lambda_1 - \delta) I+ \delta u_1u_1^*,
\end{equation*}
so that $\delta = \Delta$ and $\lambda_1$. Let $\alpha_k=\langle u_1,v^k\rangle,\beta_k=\langle u_1,x^k\rangle.$
Then, for the system matrix \eqref{eq: coefs} in $k$th iteration is given by 
\[
    H_k
    = \begin{pmatrix} v^k & x^k \end{pmatrix}^* A^* A \begin{pmatrix} v^k & x^k \end{pmatrix}
    = (\lambda_1-\delta)I_2
    +\delta
    \begin{pmatrix}\alpha_k\\ \beta_k\end{pmatrix}
    \begin{pmatrix}\alpha_k&\beta_k\end{pmatrix}
\]
with 
\[
    (H_k)_{1,1} = \rho_k = \langle v^k, A^*A v^k\rangle = \lambda_1 - \delta + \delta |\alpha_k|^2.
\]
Its leading eigenvalue is $\rho_{k+1}=\lambda_1-\delta+\delta(\alpha_k^2+\beta_k^2)$.
Hence the exact ascent is $\varepsilon_k-\varepsilon_{k+1} = \rho_{k+1} - \rho_k = \delta\beta_k^2.$
By utilizing \eqref{eq: E xx}, we obtain 
\begin{align*}
    \EE_{x^k \sim X^k \mid V^k = v^k}\bigl[\beta_k^2\bigr]
    & = \EE_{x^k \sim X^k \mid V^k = v^k}\bigl[ u_1^* x^k (x^k)^* u_1 \bigr]
    = \frac{1}{d-1} u_1^* [I_d - v^k (v^k)^*] u_1 \\
    & = \frac{1-\alpha_k^2}{d-1} 
    = \frac{\delta  - \delta \alpha_k^2 + \lambda_1 - \lambda_1}{\delta(d-1)}
    = \frac{\lambda_1 - \rho_k}{\delta(d-1)}
    = \frac{\varepsilon_k}{\Delta(d-1)}.
\end{align*}
and, consequently, by iterating we get
\begin{equation*}
    \label{eq: rd}
    \EE[\varepsilon_k]
    = \Bigl(1-\frac{\delta}{\Delta(d-1)}\Bigr)^k \EE[\varepsilon_0].
\end{equation*}
Compared with Thm.~\ref{thm: local}, this rate matches the rate without the $1-\eta$ multiplier, i.e. using $\eta = 0$. Therefore, our local linear convergence rate is near-optimal and we refer to the optimal local linear convergence rate as $q_0 \coloneqq q(0,\delta,\Delta) = 1 - \frac{\delta}{\Delta(d-1)}$.  
\end{example}

The dimension dependence of our convergence guarantee contrasts with that of the randomized power method applied to $A^*A$, where the initialization is uniformly sampled on the unit sphere \cite{kuczynski1992estimating,delcorso1997estimating}. Under the assumption of a positive spectral gap, the contraction factor becomes $\lambda_2/\lambda_1 = 1 - \delta/\lambda_1$, without an extra $d-1$ factor in the denominator. In view of Ex.~\ref{example}, the random plane algorithm encounters the curse of dimensionality. Whether a different adjoint-free algorithm may avoid it is an open problem.

\subsection{Interpretation through the PL Condition}
\label{sec: PL KL background}

Thm.~\ref{thm: local} admits an interpretation through the
Polyak--\L{}ojasiewicz (PL)
condition~\cite{Polyak1963,Karimi2016LinearConvergence}.
A differentiable function $H$ on a Riemannian manifold
satisfies a PL inequality on $\mathcal U$, 
relative to a minimum value $H_*$, 
if
\[
    \|\grad H(v)\|^2
    \geq 2\mu\bigl(H(v)-H_*\bigr),
    \qquad v\in\mathcal U,\quad \mu>0,
\]
where $H(v)\geq H_*$ on $\mathcal U$ 
and the gradient norm is induced by the Riemannian metric; 
see also~\cite[Definition~5.4]{Han2025EfficientOptimizationOrthogonalityConstraint}.
This gradient-domination condition is implied 
by Euclidean strong convexity, 
but requires neither convexity nor a unique minimizer~\cite{Karimi2016LinearConvergence}.

For $\varepsilon(v) \coloneqq \lambda_1-\|Av\|^2$ on $\sphere^{d-1}$ as in \eqref{eq: eps v},
we have $\min_{\norm{v}=1} \varepsilon(v) = 0$ and $\grad \varepsilon(v) = -2r(v)$,
where $r(v) \coloneqq A^*Av-\|Av\|^2v$ is defined in \eqref{eq: res}.
Lem.~\ref{lem: residual} therefore gives, for $\eta\in(0,1)$
and $v\in\mathcal E_{\eta,\delta}$,
\[
    \|\grad \varepsilon(v)\|^2
    \geq 4\varepsilon(v)\bigl(\delta-\varepsilon(v)\bigr)
    \geq 4(1-\eta)\delta \varepsilon(v).
\]
Thus, $\varepsilon$ satisfies a PL inequality on $\mathcal E_{\eta,\delta}$ 
with $\mu=\mu_{\eta,\delta}=2(1-\eta)\delta$, 
without requiring geodesic strong convexity. 
Combining this inequality with Lem.~\ref{lem: ascent} yields
\[
    \EE_{X^k\mid V^k=v}\left[\varepsilon(V^{k+1})\right]
    \leq \varepsilon(v)
    -\frac{\|\grad \varepsilon(v)\|^2}{4(d-1)\Delta}
    \leq q
    \varepsilon(v),
    \qquad v\in\mathcal E_{\eta,\delta},
\]
where $q$ is defined in~\eqref{eq: q}. 

Gradient domination was also used to establish local linear convergence rates for Riemannian gradient ascent for finding the leading eigenvector in \cite[Prop. 4]{alimisis2021distributed} under the assumption of the spectral gap and with the exact logarithmic map on the sphere. 

However, a global PL inequality relative to $\min_{\norm{v}=1}\varepsilon(v) = 0$ cannot hold since it implies that the function only has global minima. This condition fails since nonleading eigenvectors are critical points with $\varepsilon(v) > 0$.
Moreover, the local PL inequality does not quantify the time needed to reach $\mathcal E_{\eta,\delta}$.
Sec.~\ref{sec: linear convergence rate} addresses this remaining step.

\subsection{Global Linear Convergence Rates}
\label{sec: linear convergence rate}

Now, we focus on entering the neighborhood $\mathcal E_{\eta,\delta}$.
The almost sure convergence rate established in Thm.~\ref{thm: sublinear} implies that the number of iterations needed  
\[
    T_{\eta,\delta} \coloneqq \inf\{k\geq0:\varepsilon_k\leq\eta\delta\},
\]
is almost surely finite. 
We first study the tails of its distribution.
\begin{proposition}
    \label{eq: global T}
    Let $d\geq3$, $\eta\in(0,1)$, 
    and let $q$ be as in Thm.~\ref{thm: local}.
    There exists 
    \begin{equation}
        \label{eq: p}
        p = p(\eta,\delta,\Delta)
        \geq \tfrac{1}{2\sqrt{\pi d}} \left(\tfrac{\eta\delta}{\Delta}\right)^{\frac{d-2}{2}}
        > 0
    \end{equation}
    such that
    \begin{equation}
        \label{eq: tail}
        \PP(T_{\eta,\delta}>k) \leq (1-p)^k 
        \qquad k\geq0.
    \end{equation}
    In particular, 
    $T_{\eta,\delta}<\infty$ almost surely. 
\end{proposition}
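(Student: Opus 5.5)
The plan is to show a \emph{uniform one-step entrance probability}: for every fixed $v^k\in\sphere^{d-1}$ with $v^k\notin\mathcal E_{\eta,\delta}$,
\[
\PP_{X^k\mid V^k=v^k}\bigl(v^{k+1}\in\mathcal E_{\eta,\delta}\bigr)\ge p,
\]
where $p$ does not depend on $v^k$. Once this holds, the tail bound follows from the Markov property. Since $\varepsilon_k$ is nonincreasing, $\{T_{\eta,\delta}>k+1\}=\{T_{\eta,\delta}>k\}\cap\{v^{k+1}\notin\mathcal E_{\eta,\delta}\}$. Conditioning on $V^k$ gives $\PP(T_{\eta,\delta}>k+1)\le(1-p)\,\PP(T_{\eta,\delta}>k)$, and induction from $\PP(T_{\eta,\delta}>0)\le 1$ yields \eqref{eq: tail}. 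Letting $k\to\infty$ then gives $T_{\eta,\delta}<\infty$ a.s.

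\textbf{Geometric reduction.} Set $t\coloneqq\eta\delta/\Delta\in(0,1)$, $\alpha\coloneqq\scp{u_1}{v^k}$ and $\beta\coloneqq\scp{u_1}{x^k}$. From \eqref{eq: eps k} we have $\varepsilon(w)=\sum_i\delta_i\gamma_i^2\le\Delta\bigl(1-\scp{u_1}{w}^2\bigr)$ for every unit $w$.

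Let $w$ be the normalized orthogonal projection of $u_1$ onto $\operatorname{span}\{v^k,x^k\}$. Then $\scp{u_1}{w}^2=\alpha^2+\beta^2$. Since $v^{k+1}$ maximizes $\|Az\|^2$ over this plane, we get
\[
\varepsilon_{k+1}\le\varepsilon(w)\le\Delta(1-\alpha^2-\beta^2).
\]
Next write $\beta=\scp{P_{T_{v^k}}u_1}{x^k}$, where $\|P_{T_{v^k}}u_1\|^2=1-\alpha^2$. If $\alpha^2=1$ we are already in the set, so assume $\alpha^2<1$. Then $\beta^2=(1-\alpha^2)c^2$, where $c$ is the cosine between $x^k$ and a fixed unit vector of $T_{v^k}$. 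This gives
\[
1-\alpha^2-\beta^2=(1-\alpha^2)(1-c^2)\le 1-c^2 .
\]
Hence the event $\{c^2\ge 1-t\}$ forces $\varepsilon_{k+1}\le\eta\delta$. Because $x^k$ is uniform on $T_{v^k}\cap\sphere^{d-1}\simeq\sphere^{d-2}$, the law of $c$ does not depend on $v^k$. So we may take
\[
p\coloneqq\sigma_{\sphere^{d-2}}\bigl(\{x:\ x_1^2\ge 1-t\}\bigr),
\]
which is the mass of two antipodal spherical caps of angular radius $\theta_0$, where $\sin^2\theta_0=t$.

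\textbf{Cap estimate (the main obstacle).} It remains to show $p\ge\frac{1}{2\sqrt{\pi d}}\,t^{(d-2)/2}$. With $n=d-1$, the density of $c$ on $[-1,1]$ is
\[
\frac{\Gamma(n/2)}{\sqrt\pi\,\Gamma((n-1)/2)}\,(1-c^2)^{(n-3)/2}.
\]
I would bound each cap from below by the orthogonal projection of the cap onto the hyperplane $x_1=0$. This projection is an $(n-1)$-ball of radius $\sin\theta_0=\sqrt t$, and projection does not increase area. This gives
\[
\sigma(\text{cap})\ge t^{(n-1)/2}\,\frac{\omega_{n-1}}{|\sphere^{n-1}|}=t^{(n-1)/2}\,\frac{\Gamma(n/2)}{2\sqrt\pi\,\Gamma((n+1)/2)},
\]
where $\omega_{n-1}$ is the volume of the unit $(n-1)$-ball. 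Doubling for the two caps and applying Gautschi's inequality $\Gamma(n/2)/\Gamma((n+1)/2)\ge\sqrt{2/(n+1)}\ge 1/\sqrt d$ should give exactly the constant $\frac{1}{2\sqrt{\pi d}}$ with exponent $(n-1)/2=(d-2)/2$. I expect the fiddly part to be getting these Gamma-ratio constants right, and checking the small case $d=3$, where $\sphere^{1}$ is a circle and the estimate can be verified directly via $\arcsin\sqrt t\ge\sqrt t$.
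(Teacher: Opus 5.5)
Your proposal is correct and follows the paper's proof essentially step by step. It uses the same uniform one-step entrance bound via the normalized projection of $u_1$ onto $\operatorname{span}\{v^k,x^k\}$, the same reduction $1-\alpha^2-\beta^2=(1-\alpha^2)(1-c^2)\le 1-c^2$ to two antipodal caps on $\sphere^{d-2}$, and the same conditioning/induction on $\{T_{\eta,\delta}>k\}=\{v^k\notin\mathcal E_{\eta,\delta}\}$. The only difference is in the cap bound: the paper identifies the cap mass as $\mathrm I_{\eta\delta/\Delta}(\tfrac{d-2}{2},\tfrac12)$ and cites an external lemma, whereas your projection-plus-Gautschi argument proves the bound directly and gives $p\ge\sqrt{2/(\pi d)}\,(\eta\delta/\Delta)^{(d-2)/2}$, which is stronger than the stated $\tfrac{1}{2\sqrt{\pi d}}$ by a factor $2\sqrt2$.
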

\begin{proof}
    By Bayes rule, we have
    \begin{align}
    \PP(T_{\eta,\delta}>k+1) 
        & = \int_{v^k \in \sphere^{d-1}} \PP(T_{\eta,\delta}>k+1 \mid V^k = v^k) \, \dd \PP_{V^k}( v^k) \nonumber \\
        & = \int_{v^k \in \mathcal E_{\eta, \delta}} \PP(T_{\eta,\delta}>k+1 \mid V^k = v^k) \, \dd \PP_{V^k}( v^k)  \label{eq: Bayes} \\
        & \quad + \int_{v^k \notin \mathcal E_{\eta, \delta}} \PP(T_{\eta,\delta}>k+1 \mid V^k = v^k) \, \dd \PP_{V^k}( v^k). \nonumber 
    \end{align}
    Since $V^k \in \mathcal E_{\eta,\delta}$ is equivalent to $T_{\eta,\delta} \le k$, the first integral is zero. Next, we focus on the complement  
    \begin{equation}
        \label{eq: counter}
        \PP(T_{\eta,\delta} = k+1 \mid V^k = v^k)
        = 1 - \PP(T_{\eta,\delta}>k+1 \mid V^k = v^k) 
    \end{equation}
    for $v^k \notin \mathcal E_{\eta, \delta}$ characterized by the random variable $X^k$.

    Let $u_1 \in \sphere^{d-1}$ be the eigenvector of $A^*A$ corresponding to $\lambda_1$.
    We write $\alpha_k \coloneqq \langle v^k,u_1\rangle$, 
    and define
    \[
        w^k \coloneqq \frac{u_1 - \alpha_k v^k}{\sqrt{1-\alpha_k^2}}
        \in \sphere^{d-1}.
    \]
    Notice that the denominator is nonzero, 
    since $|\alpha_k| = 1$ would imply $\varepsilon_k=0$ and contradict $v^k \notin \mathcal E_{\eta, \delta}$.
    Furthermore,
    let $P_{\operatorname{span}\{v^k, x^k\}}$ be the orthogonal projection onto $\operatorname{span}\{v^k, x^k\}$
    such that 
    \begin{equation*}
        \label{eq: proj}
        P_{\operatorname{span}\{v^k, x^k\}} u_1
        = \scp{u_1}{v^k} v^k + \scp{u_1}{x^k} x^k.
    \end{equation*}
    Orthogonality gives
    \[
        \norm{P_{\operatorname{span}\{v^k, x^k\}} u_1}^2
        = \abs{\scp{u_1}{v^k}}^2 + \abs{\scp{u_1}{x^k}}^2,
    \]
    and by $u_1 = \alpha_k v^k + \sqrt{1 - \alpha_k^2} w^k$
    we obtain 
    $\scp{u_1}{x^k} = \sqrt{1 - \alpha_k^2} \scp{w^k}{x^k}$
    such that
    \begin{equation}
        \label{eq: x w}
        \|P_{\operatorname{span}\{v^k, x^k\}} u_1\|^2
        =\alpha_k^2+(1-\alpha_k^2)|\langle x^k,w^k\rangle|^2
        \geq|\langle x^k,w^k\rangle|^2,
    \end{equation}
    since $\abs{\scp{x^k}{w^k}} \leq 1$ by the Cauchy-Schwarz inequality.
    On the event
    \[
        \mathcal G_k \coloneqq \left\{x \in \sphere^{d-1} \cap T_{v^k} 
        \;\middle|\; 
        |\langle x,w^k\rangle|^2
                      \geq 1-\frac{\eta\delta}{\Delta}\right\},
    \]
    the vector $z \coloneqq P_{\operatorname{span}\{v^k, x^k\}} u_1/\|P_{\operatorname{span}\{v^k, x^k\}} u_1\|$ is well defined and satisfies 
    \[
        |\langle z,u_1\rangle|^2 
        = |\langle z, P_{\operatorname{span}\{v^k, x^k\}}u_1\rangle|^2 
        =  \|P_{\operatorname{span}\{v^k, x^k\}}u_1\|^2
        \overset{\eqref{eq: x w}}{\geq} \abs{\scp{x^k}{w^k}}^2
        \geq 1 - \frac{\eta \delta}{\Delta}.
    \]
    Since $\norm{z} = 1$, expanding $z$ in an orthonormal eigenbasis $\{u_i\}_{i = 1}^d$ of $A^*A$ gives
    \begin{equation}
        \label{eq: err}
        \varepsilon(z)
        = \lambda_1 - \|Az\|^2
        = \sum_{i=2}^d(\lambda_1-\lambda_i)
        \abs{\scp{z}{u_i}}^2
        \leq \Delta \sum_{i=2}^d \abs{\scp{z}{u_i}}^2
        = \Delta (1-\abs{\scp{z}{u_1}}^2)
        \leq\eta\delta.
    \end{equation}
    Hence, by optimality from \eqref{eq: plane}, 
    on $\mathcal G_k$ it holds 
    \begin{align*}
        \varepsilon_{k+1}
        = \lambda_1 - \norm{A v^{k+1}}^2
        = \min_{v \in \operatorname{span}\{v^k, x^k\} \cap \sphere^{d-1}} \lambda_1 - \norm{A v}^2
        \leq \lambda_1 - \norm{A z}^2
        \leq \eta\delta.
    \end{align*}
    Therefore,
    \begin{equation}\label{eq: tail lower bound}
    \PP(T_{\eta,\delta} = k+1 \mid V^k = v^k) \ge \PP(X^k \in \mathcal G_k \mid V^k = v^k) = \PP_{X^k \mid V^k=v^k}(\mathcal G_k)
    \end{equation}
    and it remains to bound the conditional probability of $X^k \in \mathcal G_k$. Recall that given $V^k=v^k$ the random vector $X^k$ is uniformly distributed on the sphere \(T_{v^k} \cap \sphere^{d-1} \cong \sphere^{d-2}\). 
    Since \(w^k \in T_{v^k} \cap \sphere^{d-1}\), 
    the event \smash{\(\mathcal G_k = \bigl\{|\langle X^k,w^k\rangle|^2\ge 1-\frac{\eta\delta}{\Delta} \bigr\}\)} 
    consists of two antipodal spherical caps with polar angle \smash{\(\theta = \arccos\bigl((1-\frac{\eta\delta}{\Delta})^{1/2}\bigr)\)}.  
    According to \cite{frankl1990some,li2011concise}, their normalized surface area is given by the regularized incomplete Beta function $\mathrm{I}_x(a,b)$ evaluated at 
    $x = \sin^2(\theta) = 1 - (1-\frac{\eta\delta}{\Delta}) = \tfrac{\eta\delta}{\Delta}$, $a = \tfrac{d-2}{2}$ and $b = \tfrac{1}{2}$, yielding
    \[
        \PP_{X^k \mid V^k=v^k}(\mathcal G_k)
        = \mathrm{I}_{\tfrac{\eta\delta}{\Delta}}\!\left(\frac{d-2}{2},\frac12\right)  =\frac{2\int_0^\theta\sin^{d-3}\varphi\,d\varphi}{\int_0^\pi\sin^{d-3}\varphi\,d\varphi}.
    \]
    By \cite[Lem.~A.1]{bresch2024matrixfreestochasticcalculationoperator},
    we obtain the more restrictive bound 
    \[
        \PP_{X^k \mid V^k=v^k}(\mathcal G_k)
        =: p(\eta,\delta,\Delta)
        \geq \tfrac{1}{2\sqrt{\pi d}} \left(\tfrac{\eta\delta}{\Delta} \right)^{\frac{d-2}{2}}
        > 0,
    \]
    which proves \eqref{eq: p}. 
    Now, we return to \eqref{eq: Bayes} and substitute \eqref{eq: counter} and \eqref{eq: tail lower bound} to get
    \begin{align*}
        \PP(T_{\eta,\delta}>k+1) 
        & = \int_{v^k \not\in \mathcal E_{\eta, \delta}} \left(1 - \PP(T_{\eta,\delta}= k+1 \mid V^k = v^k) \right) \, \dd \PP_{V^k}( v^k)  \\
        &\le \int_{v^k \not\in \mathcal E_{\eta, \delta}} \left(1 - p \right) \, \dd \PP_{V^k}( v^k) \\
        &= (1 - p) \PP_{V^k}(\mathcal E^c_{\eta, \delta})
        = (1 - p) \PP(T_{\eta,\delta}>k). 
    \end{align*}
    Repeating the bound $k$ times gives
    \[
    \PP(T_{\eta,\delta}>k+1) \le (1 - p)^{k+1} \PP(T_{\eta,\delta}>0) \le (1-p)^{k +1},
    \]
    where we used the fact that the probability is at most one. 
\end{proof}

\begin{remark}
    The initial survival probability can be bounded more sharply.
    In the last step of Prop.~\ref{eq: global T}, 
    we derive the stronger upper bound on $\PP(T_{\eta,\delta}>0)$.
    For $V^0 \sim \mathcal U(\sphere^{d-1})$, 
    we have by \eqref{eq: err} that
    \[
        \bigl\{|\langle V^0,u_1\rangle|^2\ge1-\tfrac{\eta\delta}{\Delta}\bigr\}
        \subseteq\{T_{\eta,\delta}=0\}.
    \]
    The event on the left consists of two antipodal spherical caps, now, on the $(d-1)$-dimensional sphere.
    Hence, by \eqref{eq: counter}, we obtain 
    \begin{equation*}
        \PP(T_{\eta,\delta} > 0)
        = 1 - \PP(T_{\eta,\delta} = 0)
        \leq 1 - \mathrm I_{\frac{\eta\delta}{\Delta}}\!\left(\frac{d-1}{2}, \frac{1}{2}\right)
        \leq 1 - \tfrac{1}{2 \sqrt{\pi(d+1)}}\!\left(\tfrac{\eta\delta}{\Delta}\right)^{\frac{d-1}{2}}.
    \end{equation*}
\end{remark}

Now, we are ready to establish the linear convergence rates from the main theorem, cf.~Thm.~\ref{thm: contribution}.

\begin{theorem}
    \label{thm: global}
    Consider the setting as in Prop.~\ref{eq: global T}.
    For all integers $0\leq j\leq k$, we have
    \begin{equation}
        \label{eq: global-bound}
        \EE[\varepsilon_k]
        \leq \Delta(1-p)^j+\eta\delta\,q^{k-j}.
    \end{equation}
    Consequently, 
    with 
    \begin{equation}
        \varrho_1 \coloneqq \varrho_1(\eta, \delta,\Delta) \coloneqq 
        \exp\left( \frac{\log(1-p)\log(q)}{\log(1-p) + \log(q)} \right) < 1    
    \end{equation}
    it holds
    \begin{equation}
        \label{eq: global-geometric}
        \EE[\varepsilon_k]
        \leq C \varrho_1^k,
        \quad \text{where} \quad 
        C \coloneqq \Delta + \eta\delta q^{-1}.
    \end{equation}
\end{theorem}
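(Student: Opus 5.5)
Split the expectation according to whether the iterate has already entered the neighbourhood $\mathcal E_{\eta,\delta}$ by time $j$:
\[
\EE[\varepsilon_k] = \EE[\varepsilon_k \mathbbm 1_{\{T_{\eta,\delta}>j\}}] + \EE[\varepsilon_k \mathbbm 1_{\{T_{\eta,\delta}\le j\}}].
\]

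\emph{The bad event.} We use the almost-sure bound $\varepsilon_k\le\Delta$ together with Prop.~\ref{eq: global T}. This gives
\[
\EE[\varepsilon_k \mathbbm 1_{\{T_{\eta,\delta}>j\}}] \le \Delta\,\PP(T_{\eta,\delta}>j) \le \Delta(1-p)^j .
\]

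\emph{The good event.} The event $\{T_{\eta,\delta}\le j\}$ coincides with $\{V^j\in\mathcal E_{\eta,\delta}\}$, because $\varepsilon$ is nonincreasing. Since $\mathcal E_{\eta,\delta}$ is forward invariant, the contraction \eqref{eq: contraction} from Thm.~\ref{thm: local} applies at every step $i\ge j$ on this event. Concretely, $\mathbbm 1_{\{V^i\in\mathcal E_{\eta,\delta}\}}$ is a function of $V^i$, and the indicators are nonincreasing in time in the sense that $\mathbbm 1_{\{V^{i+1}\in\mathcal E\}}\ge \mathbbm 1_{\{V^i\in\mathcal E\}}$. Conditioning on $V^i$ via the Markov property then yields
\[
\EE[\varepsilon_{i+1}\mathbbm 1_{\{V^j\in\mathcal E\}}] \le q\,\EE[\varepsilon_i \mathbbm 1_{\{V^j\in\mathcal E\}}].
\]
The one subtlety is that the indicator refers to time $j$, not time $i$. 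This is handled by the Markov property: the event $\{V^j\in\mathcal E\}$ is $\sigma(V^0,\dots,V^i)$-measurable, and the conditional law of $X^i$ given the past depends only on $V^i$. Moreover, on this event $V^i\in\mathcal E$, so the pointwise bound applies. Iterating from $i=j$ to $i=k-1$ and using $\varepsilon_j\le\eta\delta$ on the event gives
\[
\EE[\varepsilon_k \mathbbm 1_{\{T_{\eta,\delta}\le j\}}] \le q^{k-j}\eta\delta .
\]
Adding the two contributions proves \eqref{eq: global-bound}.

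\emph{Optimising over $j$.} Write $a\coloneqq-\log(1-p)>0$ and $b\coloneqq-\log q>0$. Balancing $(1-p)^j$ against $q^{k-j}$ means solving $ja=(k-j)b$, i.e.\ $j^*=kb/(a+b)$. At this $j^*$ both terms equal $e^{-kab/(a+b)}=\varrho_1^k$, since $\log\varrho_1=\log(1-p)\log q/(\log(1-p)+\log q)=-ab/(a+b)<0$.

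Because $j$ must be an integer, take $j=\lceil j^*\rceil$, which lies in $[0,k]$. Then $(1-p)^j\le(1-p)^{j^*}=\varrho_1^k$. Also $k-j\ge k-j^*-1$, so $q^{k-j}\le q^{-1}q^{k-j^*}=q^{-1}\varrho_1^k$. Combining these gives $\EE[\varepsilon_k]\le(\Delta+\eta\delta q^{-1})\varrho_1^k$, which is \eqref{eq: global-geometric}.

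\emph{Main obstacle.} The step needing the most care is justifying the contraction on the good event, namely that the conditional-expectation argument of Thm.~\ref{thm: local} survives multiplication by the indicator of the earlier event. The rest is bookkeeping.
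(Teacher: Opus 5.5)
Your proposal is correct and follows the paper's proof essentially step for step. It uses the same split on $\{T_{\eta,\delta}>j\}$ versus $\{T_{\eta,\delta}\le j\}$, the same bounds $\Delta(1-p)^j$ and $\eta\delta q^{k-j}$ via Prop.~\ref{eq: global T} and Thm.~\ref{thm: local}, and the same choice $j=\lceil j_*\rceil$ with $j_*=k\log q/(\log(1-p)+\log q)$. Your justification of the contraction on the good event, via $\sigma(V^0,\dots,V^i)$-measurability and the Markov property, is if anything more careful than the paper's; just note that the indicators $\mathbf{1}_{\{V^i\in\mathcal E_{\eta,\delta}\}}$ are nondecreasing in $i$, not nonincreasing.
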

    
\begin{proof}
    For $0\leq j\leq k$, 
    we split
    \[
        \EE[\varepsilon_k]
        =\EE[\varepsilon_k\mathbf1_{\{T_{\eta,\delta}>j\}}]
        + \EE[\varepsilon_k\mathbf1_{\{T_{\eta,\delta} \leq j\}}].
    \]
    By $\varepsilon_k \le \Delta$ and Prop.~\ref{eq: global T}, the first term is at most 
    \begin{equation}\label{eq: exp not in neighborhood}
        \EE[\varepsilon_k\mathbf1_{\{T_{\eta,\delta}>j\}}] 
        \le \EE[\Delta \mathbf1_{\{T_{\eta,\delta}>j\}}]
        = \Delta \PP(T_{\eta,\delta}>j)
        \le \Delta (1-p)^j.   
    \end{equation}
    The event $\{T_{\eta,\delta}\leq j\}$ implies $\varepsilon_j\leq\eta\delta$. Therefore, taking the expectation over $X_{j}, \ldots, X_{k}$ first by Thm.~\ref{thm: local} and \eqref{eq: local-rate} in particular gives
    \begin{align}
        \EE[\varepsilon_k\mathbf1_{\{T_{\eta,\delta}\leq j\}}]
        & = \EE[ \EE_{X_j, \ldots, X_k} [\varepsilon_k \mathbf1_{\{T_{\eta,\delta}\leq j\}}]]
        = \EE[ \EE_{X_j, \ldots, X_k} [\varepsilon_k ] \mathbf1_{\{T_{\eta,\delta}\leq j\}}] \nonumber \\
        & \le q^{k-j} \EE[\varepsilon_j\mathbf1_{\{T_{\eta,\delta}\leq j\}}]
        \le \eta \delta q^{k-j} \EE[\mathbf1_{\{T_{\eta,\delta}\leq j\}}]
        \le \eta \delta q^{k-j}. \label{eq: exp in neighborhood}
    \end{align}
    This proves~\eqref{eq: global-bound}. 
    Now, we would ideally choose $j$ such that $(1-p)^{j} = q^{k-j}$. 
    This corresponds to $(q(1-p))^{j} = q^{k}$ and taking the logarithm gives optimal $j_*$
    \[
    j_* = k \log(q) / (\log(1-p) + \log(q)).
    \]
    Since this value is not necessarily an integer, we pick $j = \lceil j_* \rceil$, the rounded-up value. As a result, we have
    \[
    \EE[\varepsilon_k] \le \Delta (1-p)^{\lceil j_* \rceil} + \eta \delta q^{k - \lceil j_* \rceil}
    \le \Delta (1-p)^{j_*} + \eta \delta q^{-1} q^{k - j_*},    
    \]
    where we used that $\lceil j_* \rceil -1 \le j_* \le \lceil j_* \rceil$. Finally, we observe that
    \[
    q^{k - j_*} = (1-p)^{j_*} = (1-p)^{k \log(q) / (\log(1-p) + \log(q))} = \varrho_1^k,
    \]
    which proves~\eqref{eq: global-geometric}.
\end{proof}

We also establish an alternative linear rate in the case when $q \neq 1 - p$.

\begin{theorem}
    \label{thm: global-unequal}
    Consider the setting as in Thm.~\ref{thm: global}.
    In the case $q \neq 1 - p$ we have   
    \begin{equation}
        \label{eq: global-geometric-strong neq}
        \EE[\varepsilon_k]
        \leq C \varrho_2^k,
        \quad \text{where} \quad 
        C \coloneqq \Delta + \eta\delta \bigl(1 + \tfrac{1}{|1 - q - p|}\bigr),
        \; 
        \varrho_2 \coloneqq \varrho_2(\eta, \delta,\Delta) \coloneqq \max\{q, 1 - p\},
    \end{equation}
    and otherwise, if $q = 1- p$, we get a near-linear rate
    \begin{align}
        \label{eq: global-geometric-strong eq}
        \EE[\varepsilon_k]
        \leq 
        (\Delta  + \eta \delta (1 + k \varrho_2^{-1} )) \varrho_2^{k},
        \quad k \geq 1,
    \end{align}
    that can be further bounded by
    \[
    \EE[\varepsilon_k]
        \le\left(\Delta+\eta\delta+
        \frac{\eta\delta}{e\alpha\varrho_2\log(1/\varrho_2)}\right)
        \varrho_2^{(1-\alpha)k},
        \quad k \geq 1, 
        \quad \text{for any} \quad \alpha \in (0,1).
    \]
\end{theorem}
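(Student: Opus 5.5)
Instead of bounding $\EE[\varepsilon_k]$ by splitting at a single fixed index $j$ as in Thm.~\ref{thm: global}, I will decompose according to the exact value of the entrance time $T \coloneqq T_{\eta,\delta}$:
\[
\EE[\varepsilon_k]
= \EE[\varepsilon_k \mathbf 1_{\{T>k\}}] + \sum_{j=0}^{k}\EE[\varepsilon_k\mathbf 1_{\{T=j\}}].
\]
By \eqref{eq: exp not in neighborhood} with $j=k$, the first term is at most $\Delta(1-p)^k$. For each summand, $\{T=j\}$ is determined by $V^0,\dots,V^j$ and implies $\varepsilon_j\le\eta\delta$. Conditioning on $V^j$, using the Markov property and iterating \eqref{eq: contraction} from Thm.~\ref{thm: local} (valid because the iterates stay in $\mathcal E_{\eta,\delta}$ by monotonicity) gives
\[
\EE[\varepsilon_k\mathbf 1_{\{T=j\}}]\le \eta\delta\, q^{k-j}\,\PP(T=j).
\]
This is the same argument as \eqref{eq: exp in neighborhood}, applied to the event $\{T=j\}$ instead of $\{T\le j\}$.

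Next I need a bound on $\PP(T=j)$. For $j=0$ I use $\PP(T=0)\le 1$. For $j\ge1$, $\{T=j\}\subseteq\{T>j-1\}$, so Prop.~\ref{eq: global T} gives $\PP(T=j)\le(1-p)^{j-1}$. Hence
\[
\EE[\varepsilon_k]\le \Delta(1-p)^k+\eta\delta q^k+\eta\delta\sum_{j=1}^k q^{k-j}(1-p)^{j-1}.
\]
The remaining sum is a discrete convolution of two geometric sequences, and evaluating it is the main step. If $q\ne 1-p$, the geometric-sum formula gives
\[
\sum_{j=1}^k q^{k-j}(1-p)^{j-1}=\frac{q^k-(1-p)^k}{q-(1-p)}\le \frac{\varrho_2^k}{|1-q-p|},
\]
since the numerator is at most $\varrho_2^k$ in absolute value and has the same sign as the denominator. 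Together with $(1-p)^k,\ q^k\le \varrho_2^k$, this yields \eqref{eq: global-geometric-strong neq} with $C=\Delta+\eta\delta\bigl(1+\tfrac1{|1-q-p|}\bigr)$.

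If $q=1-p=\varrho_2$, every term of the sum equals $\varrho_2^{k-1}$, so the sum is $k\varrho_2^{-1}\varrho_2^k$, which gives \eqref{eq: global-geometric-strong eq}. For the final bound I write
\[
k\varrho_2^{k}=\bigl(k\varrho_2^{\alpha k}\bigr)\varrho_2^{(1-\alpha)k}
\]
and maximize $t\mapsto t\varrho_2^{\alpha t}=te^{-\alpha t\log(1/\varrho_2)}$ over $t>0$. The maximum is attained at $t=1/(\alpha\log(1/\varrho_2))$ with value $1/(e\alpha\log(1/\varrho_2))$. Using also $\varrho_2^k\le\varrho_2^{(1-\alpha)k}$ for the constant terms yields the stated estimate.

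The only delicate point is the measurability and conditioning step that justifies applying \eqref{eq: local-rate} on the event $\{T=j\}$. I will handle it exactly as in the proof of Thm.~\ref{thm: global}: the indicator is $\sigma(V^0,\dots,V^j)$-measurable, so I pull it out of the inner conditional expectation over $X^j,\dots,X^{k-1}$.
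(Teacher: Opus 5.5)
Your proposal is correct and follows the paper's proof essentially step for step. The paper also decomposes by the exact entrance time $T_{\eta,\delta}=j$, uses $\PP(T_{\eta,\delta}=j)\le(1-p)^{j-1}$ from Prop.~\ref{eq: global T}, evaluates the geometric-convolution sum when $q\neq 1-p$, and maximizes $t\mapsto t\varrho_2^{\alpha t}$ when $q=1-p$; your remark that the indicator of $\{T_{\eta,\delta}=j\}$ depends only on $V^0,\dots,V^j$ simply spells out the step the paper calls ``analogous to \eqref{eq: exp in neighborhood}''.
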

\begin{proof}
    In contrast to Thm.~\ref{thm: global}, we perform a more detailed split on $T_{\eta,\delta}$,
    more precisely
    \[
        \EE[\varepsilon_k]
        =\EE[\varepsilon_k\mathbf1_{\{T_{\eta,\delta}>k\}}]
        + \sum_{t = 0}^k \EE[\varepsilon_k\mathbf1_{\{T_{\eta,\delta} = t\}}].
    \]
    The first term is bounded as in \eqref{eq: exp not in neighborhood}. The second term can be bounded analogously to \eqref{eq: exp in neighborhood} by $\EE[\varepsilon_k\mathbf1_{\{T_{\eta,\delta} = t\}}] \le \eta \delta q^{k-t} \PP(T_{\eta,\delta} = t)$. Using
    \[
        \PP(T_{\eta,\delta} = t)
        \leq \PP (T_{\eta,\delta} > t-1)
        \leq (1 - p)^{t-1}.
    \]
    we obtain 
    \begin{align}
        \EE[\varepsilon_k]
        &\leq 
        \Delta (1 - p)^k 
        + \eta \delta \sum_{t = 0}^k q^{k-t} \PP(T_{\eta,\delta} = t) \nonumber \\[-8pt]
        &\leq 
        \Delta (1 - p)^k 
        + \eta \delta q^{k}
        + \eta \delta q^{k-1}\sum_{t = 1}^k q^{-(t-1)} (1 - p)^{t-1}.
        \label{eq: sum exp}
     \end{align}
    If $q \neq (1-p)$, we get a geometric sum \eqref{eq: sum exp}, which gives   
    \begin{align*}
        \EE[\varepsilon_k]
        & \le \Delta (1 - p)^k 
        + \eta \delta q^{k}
        + \eta \delta q^{k-1} \frac{q^{-k}(1 - p)^{k} - 1}{q^{-1}(1 - p)-1}.
    \end{align*}
    Note that
    \begin{align*}
    q^{k-1} \frac{q^{-k}(1 - p)^{k} - 1}{q^{-1}(1 - p)-1} 
    = \frac{q^{k} - (1 - p)^{k}}{q - (1 - p)} 
    = \frac{\max^{k}\{q, 1 - p\} - \min^{k}\{q, 1-p\}} {|q - (1 - p)|} \le \frac{\varrho_2^{k}}{|1 - q - p|},
    \end{align*}
    which gives 
    \begin{equation*}
        \EE[\varepsilon_k]
        \leq 
        \Bigl(\Delta + \eta\delta \bigl(1 + \tfrac{1}{|1 - q - p|}\bigr)\Bigr)
        \varrho_2^{k}.
    \end{equation*}
    Now, if $q = (1-p) = \varrho_2$, the sum is exactly $k$, yielding  
    \begin{align*}
        \EE[\varepsilon_k]
        & \le
        \Delta (1 - p)^k 
        + \eta \delta q^{k}
        + \eta \delta k q^{k-1}
        = (\Delta  + \eta \delta (1 + k \varrho_2^{-1} )) \varrho_2^{k}.
    \end{align*}
    Splitting $\varrho_2^{k} = \varrho_2^{k(1-\alpha)} \varrho_2^{k\alpha}$ for any $\alpha \in (0,1)$ gives
    \begin{equation}\label{eq: alpha split}
    \EE[\varepsilon_k]
        \le (\Delta  + \eta \delta (1 + k \varrho_2^{k\alpha}  \varrho_2^{-1} )) \varrho_2^{k(1-\alpha)}.
    \end{equation}
    Consider the function $g(t) = t \varrho_2^{\alpha t}$.
    Since $g(t) \to 0$, as $t \to \infty$, $g(0) =0$ and $g(t) \ge 0$ on $[0, +\infty)$, the function is bounded on $[0, +\infty)$ and its maximum value is a critical point
    \begin{align*}
        0 &= \frac{\dd}{\dd t} \bigl(t \varrho_2^{\alpha t}\bigr)
        = \varrho_2^{\alpha t} + \alpha t \log(\varrho_2) \varrho_2^{\alpha t} 
        \; \Leftrightarrow \; 
        0 = \varrho_2^{\alpha t} (1 - \alpha t \log(1/\varrho_2))
        \; \Leftrightarrow \; 
        t = \frac{1}{\alpha\log(1/\varrho_2)}.
    \end{align*}
    Consequently, we bound 
    \[
    k \varrho_2^{k\alpha} = g(k) \le \max_{t \ge 0} g(t) = \frac{1}{\alpha\log(1/\varrho_2)} \varrho_2^{-\frac{1}{\log(\varrho_2)}}.
    \]
    Substituting this bound into \eqref{eq: alpha split} concludes the proof.    
\end{proof}

\begin{figure}
    \begin{minipage}{0.7\linewidth}
        \includegraphics[width=\linewidth,clip=true, trim=0pt 55pt 0pt 0pt]{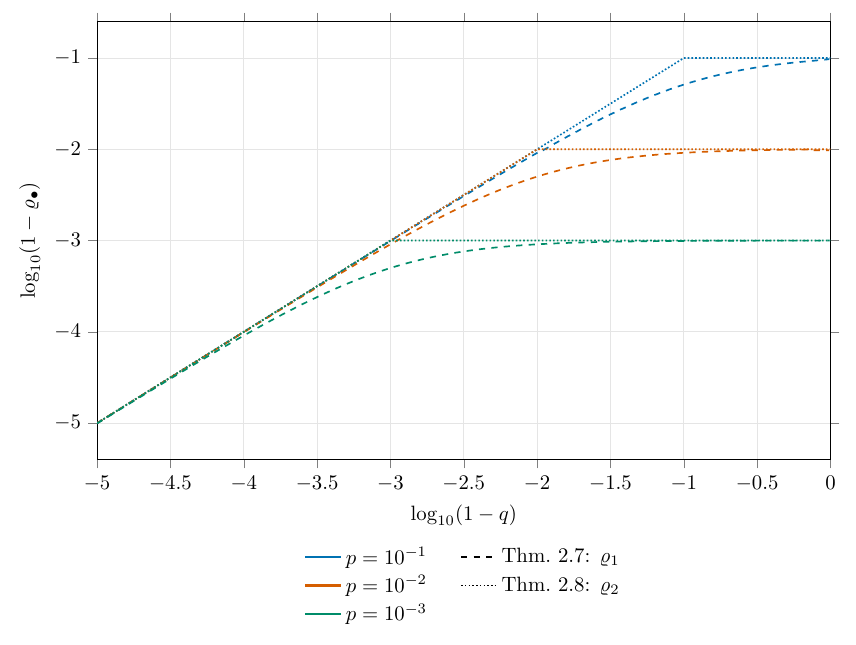}%
    \end{minipage}%
    \begin{minipage}{0.3\linewidth}
        \includegraphics[width=0.7\linewidth,clip=true, trim=135pt 0pt 190pt 260pt]{convergence_rates_log.pdf}
        \includegraphics[width=\linewidth,clip=true, trim=210pt 0pt 80pt 260pt]{convergence_rates_log.pdf}
    \end{minipage}
    \caption{%
    Comparison of global geometric convergence factors $\varrho_1$ and $\varrho_2$
    on the local contraction factor $q$, for $p \in \{0.1,0.01,0.001\}$. The larger $\log_{10}(1-\varrho_\bullet)$, the faster is convergence.
    }
    \label{fig: convergence rates}
\end{figure}

The global bound accounts for the time needed to enter the superlevel set. Its cap probability $p(\eta,\delta,\Delta)$ can be small for large $d$, as we see an explicit dependence on the dimension in \eqref{eq: p}. Our numerical trials indicate that the lower bound on $p(\eta,\delta,\Delta)$ in \eqref{eq: p} is significantly suboptimal and the rate is much faster. 

Fig.~\ref{fig: convergence rates} shows the global rates $\varrho_1$ from Thm.~\ref{thm: global} and $\varrho_2$ from Thm~\ref{thm: global-unequal} over the local convergence rate $q$ for various $p$. We observe that rates $\varrho_1$ and $\varrho_2$ are close if $|1-p - q|$ is large and $\varrho_2$ is better than $\varrho_1$ for small $|1-p - q|$.

Next, we establish bounds for the expected error and the distance to the leading eigenvector in terms of errors, which allows us to extend linear convergence rates to these quantities.  

\begin{corollary}\label{eq: other-errors}
    Consider the setup of Thm.~\ref{thm: contribution}. 
    For every $k\geq0$, we almost surely have
    \begin{align}
        0\leq \sqrt\lambda_1-\|Av^k\|&\leq\frac{\varepsilon_k}{\sqrt\lambda_1},
        \label{eq: norm-error}\\
        1-|\langle v^k,u_1\rangle|^2&\leq\frac{\varepsilon_k}{\delta},
        \label{eq: angle-error}\\
        \min_{s\in\{-1,1\}}\|v^k-su_1\|^2&\leq\frac{2\varepsilon_k}{\delta},
        \label{eq: vector-error}\\
        \varepsilon_k(\delta - \varepsilon_k)
        \leq 
        \|r^k\|^2&\leq \varepsilon_k(\Delta - \varepsilon_k)
        \leq \Delta \varepsilon_k.
        \label{eq: residual-upper}
    \end{align}
\end{corollary}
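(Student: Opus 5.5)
Each of the four inequalities is a pointwise, deterministic statement about a fixed unit vector $v^k$, so I would prove them one at a time. I would use the eigen-expansion $v^k=\sum_i\gamma_i^k u_i$ and the identity $\varepsilon_k=\sum_{i\ge2}\delta_i(\gamma_i^k)^2$ from \eqref{eq: eps k}, where $\delta_i=\lambda_1-\lambda_i$. Since every bound holds for every $v^k\in\sphere^{d-1}$, each one holds almost surely along the iterates.

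For \eqref{eq: norm-error}, nonnegativity is immediate from $\|Av^k\|^2\le\lambda_1=\|A\|^2$. For the upper bound I would factor the difference of squares:
\[
\sqrt{\lambda_1}-\|Av^k\|=\frac{\lambda_1-\|Av^k\|^2}{\sqrt{\lambda_1}+\|Av^k\|}\le\frac{\varepsilon_k}{\sqrt{\lambda_1}}.
\]
This uses $\lambda_1\ge\delta>0$, so the denominator is at least $\sqrt{\lambda_1}>0$.

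For \eqref{eq: angle-error}, I would bound $\varepsilon_k$ from below using $\delta_i\ge\delta$ for $i\ge2$:
\[
\varepsilon_k=\sum_{i\ge2}\delta_i(\gamma_i^k)^2\ge\delta\sum_{i\ge2}(\gamma_i^k)^2=\delta\bigl(1-(\gamma_1^k)^2\bigr),
\]
and $\gamma_1^k=\langle v^k,u_1\rangle$. For \eqref{eq: vector-error}, I would pick $s=\sign\langle v^k,u_1\rangle$ (either sign if the inner product is zero). Then
\[
\|v^k-su_1\|^2=2-2|\langle v^k,u_1\rangle|\le2\bigl(1-|\langle v^k,u_1\rangle|^2\bigr),
\]
because $|\langle v^k,u_1\rangle|\le1$ gives $|\langle v^k,u_1\rangle|\ge|\langle v^k,u_1\rangle|^2$. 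Combining this with \eqref{eq: angle-error} yields the claim.

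Inequality \eqref{eq: residual-upper} is exactly Lem.~\ref{lem: residual}, so I would just cite it. None of the steps is a real obstacle. The only point needing care is the sign choice in \eqref{eq: vector-error}, together with the elementary inequality $1-t\le1-t^2$ for $t\in[0,1]$.
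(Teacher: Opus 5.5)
Your proposal is correct and follows the paper's proof essentially step by step. The matching steps are: the difference-of-squares factorization for \eqref{eq: norm-error}; the eigenbasis lower bound $\varepsilon_k\ge\delta(1-|\langle v^k,u_1\rangle|^2)$; the identity $\min_{s}\|v^k-su_1\|^2=2(1-|\langle v^k,u_1\rangle|)\le 2(1-|\langle v^k,u_1\rangle|^2)$; and a direct citation of Lem.~\ref{lem: residual} for the residual bounds.
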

\begin{proof}
    The first assertion follows from 
    \[
    \varepsilon_k= \lambda_1 - \norm{Av^k}^2= \bigl(\sqrt{\lambda_1}+\|Av^k\|\bigr) \bigl(\sqrt{\lambda_1}-\|Av^k\|\bigr)
    \ge \sqrt{\lambda_1} (\sqrt{\lambda_1}-\|Av^k\|).
    \]
    Expansion of $v^k$ in an eigenbasis $\{u_i\}_{i = 1}^d$ of $A^*A$ yields
    \[
        \varepsilon_k
        = \lambda_1 - \norm{A v^k}^2
        = \sum_{i = 1}^d (\lambda_1 - \lambda_i) \abs{\scp{u_i}{v^k}}^2
        \geq \delta \sum_{i = 2}^d \abs{\scp{u_i}{v^k}}^2
        = \delta(1-|\langle v^k,u_1\rangle|^2),
    \]
    and hence \eqref{eq: angle-error}.
    Furthermore,
    \[
        \min_{s\in\{-1,1\}}\|v^k-su_1\|^2
        =2(1-|\langle v^k,u_1\rangle|) 
        \leq2(1-|\langle v^k,u_1\rangle|^2),
    \]
    yielding \eqref{eq: vector-error}.
    The last inequality is exactly the statement of Lem.~\ref{lem: residual}.
\end{proof}

Therefore, combining Cor.~\ref{eq: other-errors} with Thm.~\ref{thm: global} or Thm.~\ref{thm: global-unequal} gives a linear convergence rate for the average norm of the residual and the average distance to the leading eigenvector.

\subsection{Linear Convergence for Estimating the Generalized Rayleigh Quotient}\label{sec: generalized quotient}

In this section, we briefly discuss the extension of the derived linear convergence rates to the generalized-Rayleigh-quotient method proposed in \cite{bresch2025stochastic}.
We consider for the computation of the extremal generalized Rayleigh quotient 
\begin{equation}
    \label{eq: gen rayleigh}
    \mathcal R(A,B) 
    = \max_{v \in \RR^{d}, v \neq 0} \frac{\scp{v}{Av}}{\scp{v}{Bv}}
    = \max_{v \in \RR^{d}, v \neq 0} \frac{\scp{v}{\herA v}}{\scp{v}{Bv}},
    \qquad \herA \coloneqq \frac{1}{2}(A + A^{*}),
\end{equation}
of $A \in \RR^{d \times d}$ and positive definite $B = B^{*} \in \RR^{d \times d}$.
To solve \eqref{eq: gen rayleigh}, the authors of \cite{bresch2025stochastic} consider the exact random-plane iteration
\[
    v_{k+1} \in \!\!\!\!\argmax_{\substack{z\in\operatorname{span}\{v^k,x^k\}\\ \langle z, Bz\rangle=1}}
    \!\!\!\scp{z}{\herA z},
\]
with
\[
    \PP_{X^k \mid V^k} = \mathcal U\!\left(\sphere^{d-1}\cap T_{B v^k}\right),
    \qquad
    T_{Bv^k} \coloneqq \{w \in \RR^d \mid \scp{w}{B v^k} = 0\},
\]
similar to \eqref{eq: plane}.
Hence, we obtain a random sequence $(v^k)_{k \in \NN}$ with $\scp{v^k}{B v^k} = 1$,
where the latter maximization problem can be solved uniquely \cite[Thm.~3.1]{bresch2025stochastic}, almost surely.
Transforming $\tilde v^k \coloneqq B^{1/2}v^k$, and constructing 
\[
    \tilde x^k 
    \coloneqq \frac{B^{1/2} x^k}{\norm{B^{1/2} x^k}}
    = \frac{B^{1/2} x^k}{\sqrt{\scp{x^k}{B x^k}}},
\]
the transformed search direction
fulfills
\[
    \norm{\tilde v^k}
    = \norm{\tilde x^k}
    = 1,
    \qquad 
    \scp{\tilde x^k}{\tilde v^k}
    = \frac{\scp{v^k}{B x^k}}{\sqrt{\scp{x^k}{B x^k}}}
    = 0
\]
and is generally not uniformly distributed.
Moreover, it holds
\[
    B^{1/2}\operatorname{span}\{v^k,x^k\}
    =\operatorname{span}\{\tilde v^k,\tilde x^k\},
    \qquad
    z^*\herA z=(B^{1/2}z)^*\underbracket{B^{-1/2}A^HB^{-1/2}}_{=: C}(B^{1/2}z).
\]
Consequently, 
the generalized Rayleigh quotient \eqref{eq: gen rayleigh} becomes the ordinary Rayleigh quotient of $C \coloneqq B^{-1/2}A^HB^{-1/2}$,
and the transformed iteration satisfies
\[
    \tilde v^{k+1} \in \argmax_{\substack{z\in\operatorname{span}\{\tilde v^k, \tilde x^k\}\\\|z\|=1}}
    \scp{z}{Cz}.
\]
Moreover, the eigenvalues of $C$ and the generalized eigenvalues of $(\herA, B)$ given by
\begin{equation*}
    \{\lambda \in \RR : \exists v \in \RR^d \setminus\{0\}, \herA v = \lambda B v\}
    = \spec\{C\}  
    =: \{\tilde \lambda_1, ..., \tilde \lambda_d\}
\end{equation*}
coincide.
For that reason, we redefine in the manner of \eqref{eq: coefs}, \eqref{eq: eps v}, and \eqref{eq: res}
\begin{equation*}
    \rho_k\coloneqq \scp{\tilde v^k}{C\tilde v^k}
    =\frac{\scp{v^k}{Av^k}}{\scp{v^k}{Bv^k}},
    \qquad
    \varepsilon_k\coloneqq\tilde\lambda_1-\rho_k,
\end{equation*}
and
\begin{equation*}
    r^k\coloneqq C\tilde v^k-\rho_k\tilde v^k
    =B^{-1/2}(\herA v^k-\rho_kBv^k).
\end{equation*}
Thereby, the system matrix $H_k$ from \eqref{eq: coefs} is defined by 
$a_k\coloneqq \langle C \tilde v^k,\tilde x^k\rangle$,
and $s_k\coloneqq \langle \tilde x^k, C \tilde x^k\rangle.$
The proof of Lem.~\ref{lem: ascent} carries over through \eqref{eq: ak}. 
Using the orthogonal projection with respect to the changed Riemannian metric
yields 
\[
    P_{Bv} \coloneqq I-\frac{(Bv)(Bv)^*}{\|Bv\|^2}
    \qquad \text{with} \qquad 
    \EE_{x^k \sim X^k \mid V^k = v^k}[x^k(x^k)^*] = \frac{P_{Bv^k}}{d-1}.    
\]
Then, the conditional second moment can be, instead of \eqref{eq: exp ak}, bounded by 
\[
    \EE_{X^k \mid V^k = v^k}\left[|\langle \tilde x^k,r^k\rangle|^2\right]
    =
    \EE_{X^k \mid V^k = v^k}\left[\frac{|\langle x^k,B^{1/2}r^k\rangle|^2}{\scp{x^k}{Bx^k}}\right]
    \ge
    \frac{\|P_{Bv^k}B^{1/2}r^k\|^2}
    {(d-1)\lambda_{\max}(B)}.
\]
To bound the numerator, we have
\begin{align*}
    \|P_{Bv^k}B^{1/2}r^k\|^2
    &=\min_{\gamma\in\mathbb R}
    \|B^{1/2}r^k-\gamma Bv^k\|^2
    =\min_{\gamma\in\mathbb R}
    \|B^{1/2}(r^k-\gamma \tilde v^k)\|^2 \\
    &\ge
    \lambda_{\min}(B)
    \min_{\gamma\in\mathbb R}\|r^k-\gamma \tilde v^k\|^2
    =\lambda_{\min}(B)\|r^k\|^2.
\end{align*}
Together, we obtain
\[
    \EE_{X^k \mid V^k = v^k}\left[
    |\langle \tilde x^k,r^k\rangle|^2\right]
    \ge\frac{\|r^k\|^2}{(d-1)\kappa(B)},
    \qquad 
    \kappa(B) \coloneqq \frac{\lambda_{\max}(B)}{\lambda_{\min}(B)}.
\]
Combining this with \eqref{eq: pathwise-ascent} proves the generalized version of Lem.~\ref{lem: ascent}
\begin{equation}
    \label{eq: ascent-gen}
    \EE_{X^k \mid V^k = v^k}[\varepsilon_{k+1}]
    \le
    \varepsilon_k-
    \frac{\|r^k\|^2}{(d-1)\kappa(B)\Delta}.
\end{equation}
The change of variables applies directly to the proof technique of Lem.~\ref{lem: residual}, hence we obtain 
\[
    \varepsilon_k(\delta-\varepsilon_k)
    \le\|r^k\|^2
    \le\varepsilon_k(\Delta-\varepsilon_k)
    \le\Delta\varepsilon_k.
\]
Together with \eqref{eq: ascent-gen} we obtain  for $\varepsilon_k \leq \eta\delta$ that
\[
    \EE_{X^k \mid V^k = v^k}[\varepsilon_{k+1}]
    \le
    q_B \varepsilon_k,
    \qquad \text{where} \qquad
    q_B \coloneqq 1-\frac{(1-\eta)\delta}{(d-1)\kappa(B)\Delta},
\]
which provides the generalized local linear rate constant \eqref{eq: q}.

Similar to Prop.~\ref{eq: global T}, 
let $\tilde u_1$ be a leading unit eigenvector of $C$.
The eigenbasis argument in \eqref{eq: err} gives
\[
    \tilde \lambda_1 - \scp{z}{\herA z}
    \le\Delta\bigl(1-|\langle B^{1/2} z , \tilde u_1\rangle|^2\bigr),
    \qquad \|z\|_B \coloneqq \sqrt{\langle z, B z \rangle} = 1,
\]
and
\[
    1-|\langle B^{1/2} z,\tilde u_1\rangle|^2
    \le2\bigl(1-|\langle B^{1/2} z,\tilde u_1\rangle|\bigr)
    =\!\!\!\min_{\sigma\in\{-1,1\}}\!\!\|B^{1/2} z-\sigma \tilde u_1\|^2
    =\!\!\!\min_{\sigma\in\{-1,1\}}\!\!\|z-\sigma B^{-1/2} \tilde u_1\|_B^2.
\]
Consequently, with $\varepsilon \coloneqq \sqrt{\eta\delta/\Delta}$ 
it holds 
\[
    \left\{z \in \RR^d : \|z\|_B=1,\ 
    \min_{\sigma\in\{-1,1\}}\|z-\sigma B^{-1/2} \tilde u_1\|_B < \varepsilon
    \right\}
    \subseteq\left\{\varepsilon(z) \coloneqq \tilde \lambda_1 - \scp{z}{\herA z}\le\eta\delta\right\}.
\]
By \cite[Lem.~5.5]{bresch2025stochastic}
the entrance probability becomes
\[
    p_{B}
    \coloneqq
    \mathrm I_{r_{B}^2}\Bigl(\frac{d-2}{2}, \frac{1}{2}\Bigr),
    \quad
    \text{where}
    \quad
    r_{B}
    \coloneqq 
    \frac{\frac{\eta\delta}{\Delta}}{16 \kappa(B)^{1/2}\sqrt{1 - \tfrac{\eta\delta}{32\Delta}}}.
\]
Then, Theorems~\ref{thm: global} and~\ref{thm: global-unequal} 
apply unchanged with $q$ and $p$ replaced by $q_B$ and $p_B$, respectively.

\section{Numerical Results}\label{sec:num}

In this section, 
we substantiate the theoretical results
by exploring the actual speed of the random plane iteration (\ref{eq: plane}--\ref{eq: construction}) from \cite{bresch2024matrixfreestochasticcalculationoperator},
compared with the derived convergence rates in Thm.~\ref{thm: global} and~\ref{thm: global-unequal}.

Firstly, we study the local convergence rate in the neighborhood of the solution.
Secondly, we analyze the global convergence behavior with respect to the entrance time and subsequent contraction.
Finally, we study the influence of the spectrum $\{\lambda_i\}_{i=1}^d$ of $A^*A$ on the convergence.

All algorithms are implemented in Python and the code is publicly available%
\footnote{%
\url{https://github.com/JJEWBresch/LinearConvergenceOperatorNormSolver}.}.
The experiments are performed on an off-the-shelf MacBook Pro 2020
with Apple M1 (8‑Core GPU, 3.2~GHz) and 16~GB~RAM.

\subsection{Controlled Spectra and Initialization}\label{sec: local convergence}

In this subsection, we analyze how the spectral gap, dimension, and initialization affect the convergence behavior and compare empirical and theoretical local rates.

Given a desired spectral gap $\delta \in [0,1/2]$, we generate matrices $A \in \RR^{d \times d}$ via
$$
    A = \diag\bigl(\sqrt{\lambda_i}\bigr) Q
$$
with $Q$ being sampled uniformly on the orthogonal group $\Orth(d) = \{ V \in \RR^{d \times d} \mid V^*V = VV^* =I_d \}$, denoted by $Q \sim \mathcal U(\Orth(d))$. The spectrum is set to $\lambda_1 = 1$, $\lambda_2 = 1 - \delta = \lambda_1 -  \delta$ and the lower eigenvalues evenly spaced in
$[0,1/2]$, including $\lambda_d=0$.
Consequently, $A^* A$ has spectral gap $\delta$, spectral width $\Delta=1$ and $\|A\|=1$.

Furthermore, for given $\eta \in [0,1]$ we initialize the algorithm 
at 
\begin{equation}
    \label{eq: sec1 init}
    v^0 \coloneqq \sqrt{1-\eta/2}\,u_1+\sqrt{\eta/2}\,u_2
    \in \sphere^{d-1},
\end{equation}
where $u_1$ and $u_2$ are the eigenvectors of $A^*A$
corresponding to $\lambda_1$ and, respectively, $\lambda_2$.
Hence, $\varepsilon_0=\eta \delta/2 \leq \eta\delta$, so the local convergence rate \eqref{eq: q} applies from the first iteration,
\begin{equation*}
    \label{eq: exp error}
    \EE[\varepsilon_k]
    \leq q^k\varepsilon_0,
    \qquad
    q = q(\eta, \delta, \Delta) 
    = 1-\frac{(1-\eta)\delta}{(d-1)\Delta}
    = 1 - \frac{(1-\eta)\delta}{d-1}.
\end{equation*}
Thus, the guaranteed contraction becomes weaker as the dimension increases, the gap $\delta$ decreases, or $\eta$ increases.

\begin{figure}[b!]
    \resizebox{\linewidth}{!}{%
    \begin{tabular}{@{}r@{\quad}c@{\quad}c@{}}
    &\multicolumn{2}{c}{\legline[plotblue]{}\;\;$\delta=0.5$ \qquad
    \legline[plotorange]{}\;\;$\delta=0.1$ \qquad 
    \legline[plotgreen]{}\;\;$\delta=0.03$} \\
    &\multicolumn{2}{c}{\legline[gray]{}\;\;$\eta=0.01$ \qquad 
    \legline[gray,dashed]{}\;\;$\eta=0.1$ \qquad 
    \legline[gray,dotted]{}\;\;$\eta=0.5$ \qquad
    \legline[gray,dashdotted]\;\;Optimal theoretical local bound $q_0^k$} \\
    & $d = 20$ & $d = 60$ \\
    \rotatebox{90}{\hspace{90pt}$\EE[\varepsilon_k] / \varepsilon_0$} 
    & \includegraphics[width=0.8\linewidth, clip=true, trim=20pt 340pt 470pt 20pt]{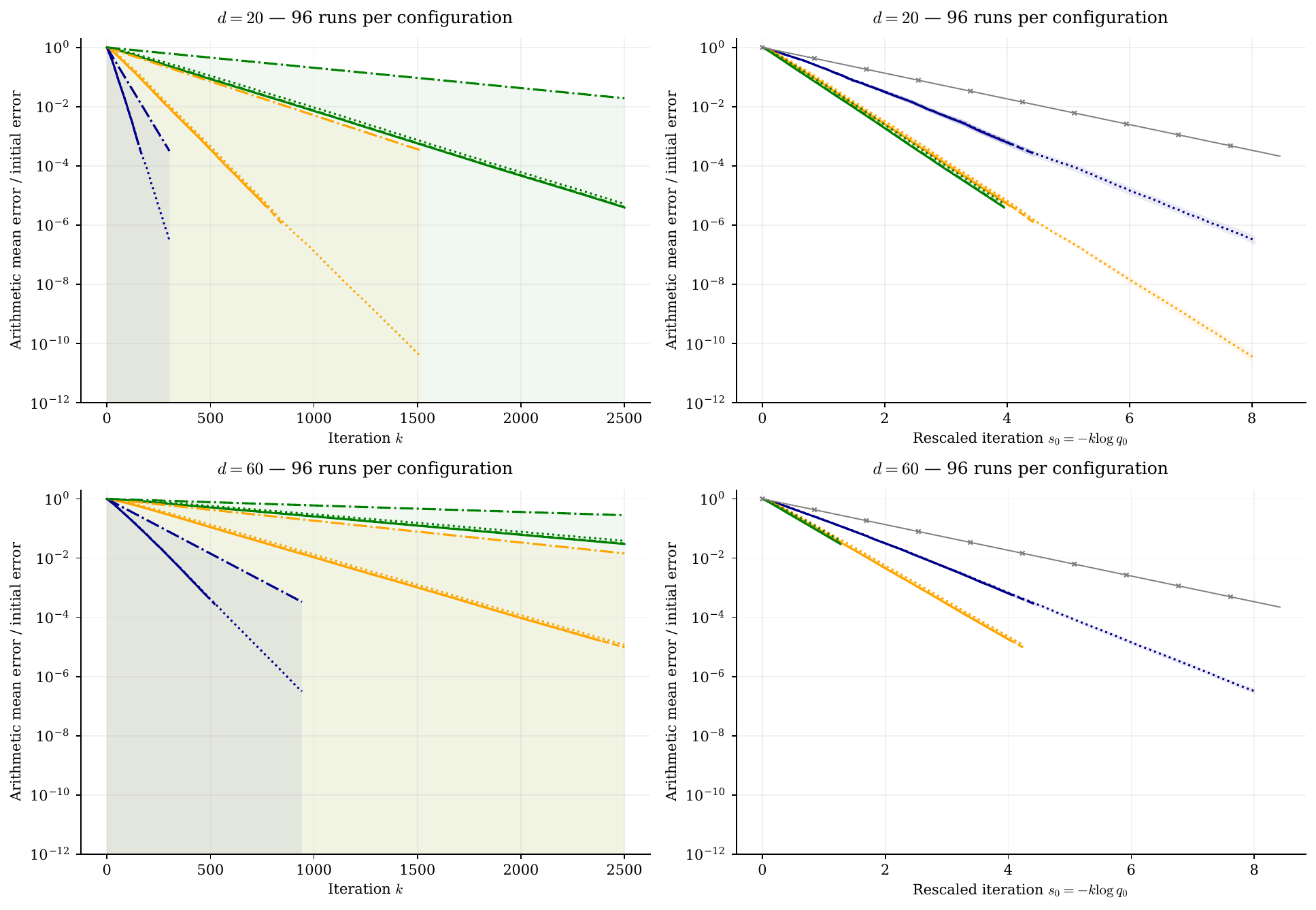} 
    & \includegraphics[width=0.8\linewidth, clip=true, trim=20pt 20pt 470pt 340pt]{local_convergence_etas_0.01_0.1_0.5.pdf} \\
    & Iteration $k$ & Iteration $k$
    \end{tabular}}
    \caption{%
    Local convergence behavior 
    for various $(d, \delta, \eta)$  
    on independent runs 
    with the initialization \eqref{eq: sec1 init}.
    The quotient of arithmetic mean error and initial error 
    over iterations is compared with the tight optimal theoretical local bound $q_0 = q(0,\delta,\Delta)$ from Ex.~\ref{example}.
    }
    \label{fig: sec1 local}
\end{figure}

In our experiments, we fix one matrix for each spectral configuration
\[
    d\in\{20,60\},\qquad
    \delta\in\{0.5,0.1,0.03\},
\]
and perform $96$ independent runs of the random-plane algorithm for each $\eta\in\{0.01,0.1,0.5\}$.
All runs start from the deterministic initial vector~\eqref{eq: sec1 init}%
---randomness arises from the sampled search directions.

For each configuration, we use at most $2\,500$ iterations and otherwise four theoretical decay times, where one decay time corresponds to $1/(-\log q)$ iterations:
\[
    K=\min\!\left\{
    \left\lceil\frac{4}{-\log q}\right\rceil,\;2500
    \right\}.
\]

Fig.~\ref{fig: sec1 local} reports the mean of the relative errors $\varepsilon_k / \varepsilon_0$ over $96$ runs as a function of iteration number and compare it with the optimal theortical local convergence rate $q_0 = q(0,\delta,\Delta)$ from Ex.~\ref{example}.

The results show approximately straight mean-error lines on logarithmic vertical axes over the observed contraction ranges, consistent with linear decay rates. Larger spectral gaps $\delta$ produce faster convergence, while larger $d$ slows it. These observations support the theorem’s qualitative prediction of local linear convergence and its dependence on the spectral gap and the dimension. The empirical rates, however, are faster than the actual theoretical bounds. Across all tested configurations, the observed rates exceed the theoretical rates by factors of approximately $1.9 - 6.3$. 
In contrast to theoretical results, we did not observe a strong empirical dependence on $\eta$.
For instance, at $d=20$ and $\delta=0.1$, the observed rate $-\log \hat q$ changes only from approximately $0.0163$ to $0.0162$ as $\eta$ increases from $0.01$ to $0.5$, while the theoretical rate $-\log q$ falls from $0.00522$ to $0.00264$.

\subsection{Global convergence}\label{sec: global convergence}

In this section, we aim to study the time and computational cost required to enter the neighborhood of local convergence.

Therefore, we use the same setting as in Sec.~\ref{sec: local convergence}, but now sample the initialization $v^0$ uniformly on the unit sphere. For each spectral configuration
\[
    (d,\delta)\in\{(20,0.5),(20,0.1),(60,0.1)\},
    \qquad
    \eta\in\{0.01,0.1,0.5\},
\]
we fix one matrix and perform $128$ independent runs with an iteration limit $K=3500$. The same initial vectors and random draws are reused across $\eta$, hence, only the threshold defining the local contraction region changes.

\begin{figure}[b!]
    \resizebox{\linewidth}{!}{%
    \begin{tabular}{@{}r@{\quad}c@{\quad}c@{\quad}c@{}}
    &\multicolumn{3}{c}{\legline[empiricalblue]{}\;\;Empirical mean \qquad
    \legband\;\;$95\%$ bootstrap interval \qquad
    \legbandred\;\;Area $95\%$ upper limit \qquad 
    \legline[black, dashed]{\times}\;\;Optimal local bound $q_0^k$
    \legline[black]{+}\;\;Global bound $\varrho_\bullet^k$}\\
    \rotatebox{90}{\hspace{60pt} $\EE[\varepsilon_k]/\varepsilon_0$} 
    & \includegraphics[width=0.55\linewidth, clip=true, trim=20pt 660pt 815pt 20pt]{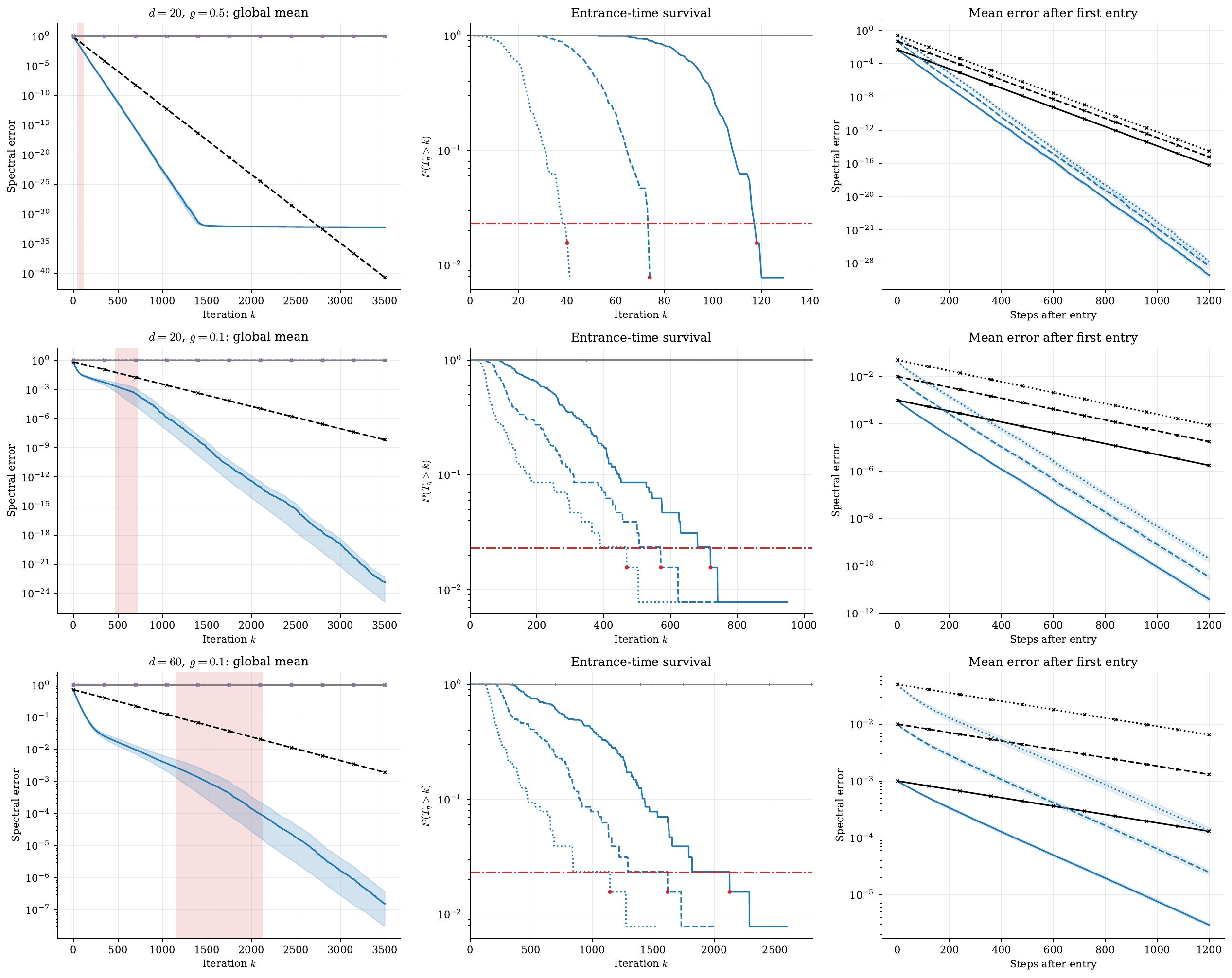} 
    & \includegraphics[width=0.55\linewidth, clip=true, trim=20pt 340pt 815pt 340pt]{global_entrance_contraction_etas_0.01_0.1_0.5.pdf}
    & \includegraphics[width=0.55\linewidth, clip=true, trim=20pt 20pt 815pt 665pt]{global_entrance_contraction_etas_0.01_0.1_0.5.pdf} \\[-8pt]
    & \tiny Iteration $k$ 
    & \tiny Iteration $k$ & \tiny Iteration $k$ \\[5pt]
    & \multicolumn{3}{c}{\legline[empiricalblue]{}\;\;Empirical survival \qquad
    \legline[gray]{}\;\;$\eta=0.01$ \qquad
    \legline[gray,dashed]{}\;\;$\eta=0.1$ \qquad
    \legline[gray,dotted]{}\;\;$\eta=0.5$ \qquad 
    \legline[gray]{+}\;\;Entrance bound $(1-p)^k$ \qquad
    \legline[red,dash dot]{}\;\;$95\%$ upper limit} \\
    \rotatebox{90}{\hspace{60pt} $\PP(T_{\eta,\delta} > k)$} 
    & \includegraphics[width=0.55\linewidth, clip=true, trim=430pt 660pt 405pt 20pt]{global_entrance_contraction_etas_0.01_0.1_0.5.pdf} 
    & \includegraphics[width=0.55\linewidth, clip=true, trim=430pt 340pt 405pt 340pt]{global_entrance_contraction_etas_0.01_0.1_0.5.pdf}
    &  \includegraphics[width=0.55\linewidth, clip=true, trim=430pt 20pt 405pt 665pt]{global_entrance_contraction_etas_0.01_0.1_0.5.pdf} \\[-8pt]
    & \tiny Iteration $k$
    & \tiny Iteration $k$
    & \tiny Iteration $k$
    \end{tabular}}
    \caption{Comparison of the spectral error,
    entrance probability over iteration,
    and the number of steps to convergence
    after entrance in the retraction region 
    for $(d,\delta)\in\{(20,0.5),(20,0.1),(60,0.1)\}$ (left to right).}
    \label{fig: sec2}
\end{figure}

The first row of Fig.~\ref{fig: sec2} shows the empirical mean error over $N=128$ independent runs, together with pointwise $95\%$ bootstrap confidence intervals (shaded), the limiting local reference $q_0$, and the theoretical global bounds. The confidence intervals are computed from $150$ bootstrap resamples, each consisting of $128$ complete trajectories sampled with replacement.

In the second row of Fig.~\ref{fig: sec2}, we compare the empirical survival probability, estimated by the fraction of runs that have not yet entered the local contraction region, with this theoretical bound. By Prop.~\ref{eq: global T}, the entrance-time survival probability is bounded by $(1-p(\eta,\delta,\Delta))^k$. The horizontal red line marks $u_N=1-0.05^{1/N}\approx0.0231$, the one-sided $95\%$ binomial upper confidence limit for the survival probability when no survivors are observed among $N$ independent runs. For each spectral configuration, the red shaded band in the first row of Fig.~\ref{fig: sec2} spans the earliest and latest iterations, across $\eta$, at which the empirical survival curves fall below this reference level. These bands summarize the observed entrance behavior across the chosen thresholds; they are neither confidence intervals for the entrance time nor ranges at which $95\%$ of runs have entered. The mean-error curves also illustrate changes in the observed decay around these entrance ranges.

\begin{wraptable}{r}{0.40\textwidth}
    \resizebox{\linewidth}{!}{%
    \begin{tabular}{c c c c c}
    \toprule
    Dimension \(d\) & Gap \(\delta\) & \(\eta=0.01\) & \(\eta=0.1\) & \(\eta=0.5\) \\
    \midrule
    20 & 0.5 & 95 & 50.5 & 22 \\
    20 & 0.1 & 255.5 & 112 & 57 \\
    60 & 0.1 & 826 & 375 & 199.5 \\
    \bottomrule
    \end{tabular}}
    \caption{Observed median entrance times for the different setting over the $N$ independent runs.}
    \label{tab: sec2}
\end{wraptable}
For the tested configurations, entrance takes longer for larger dimensions, smaller spectral gaps, and smaller values of $\eta$, which define stricter local regions. Table~\ref{tab: sec2} summarizes the sample median entrance times.
The global bounds established in Thm.~\ref{thm: global} and Thm.~\ref{thm: global-unequal} are substantially more conservative than the observed behavior. The local contraction factor $q(\eta,\delta,\Delta)$ provides a more informative description after entrance, indicating room for improvement in the global entrance-time analysis.

\subsection{Effect of the Lower spectrum}\label{sec: lower spectrum}

Lastly,
we study in this section the impact of the remaining spectrum
beyond the leading and second leading singular values,
i.e., the spectral gap,
on the achieved convergence behavior in practice.
Therefore, let us consider $\{A_{(i)}\}_{i = 1}^3$ as in Sec.~\ref{sec: local convergence},
with $\lambda_1 = 1, \delta = 0.1$, i.e. $\lambda_2 = \lambda_1 - \delta$.
For $A_1$, the remaining singular values spread over $[\sqrt\lambda_2, ..., 0]$ equidistantly, for $A_2$ they are clustered on the second-largest $\lambda_2 = ... = \lambda_{d-1} = \lambda_1 - \delta$, with $\lambda_d = 0$ and for $A_3$ we cluster them on the smallest, i.e. $\lambda_3 = ... = \lambda_d = 0$.
Therefore, all matrices have operator norm $\|A_{(i)}\|=1$, spectral gap $\delta$, and spectral width $\Delta=1$

We compare two initialization settings for each of the
three spectral configurations for $d = 100$.
In the first setting, we sample $v^0$ uniformly on the unit sphere $\sphere^{d-1}$ and perform $256$ independent runs with $20\,000$ iterations per run.
The mean spectral error relative to the initial error with the pointwise $95\%$ percentile bootstrap confidence intervals, computed using $200$ samples of the recorded trajectories, is visualized in cf.~Fig.~\ref{fig: sec4} (left).
In the second setting, for each $\eta\in\{0.01,0.1,0.5\}$, we sample random initial
vectors inside the corresponding local contraction
region $\mathcal E_{\eta,\delta}$ and perform $256$
independent runs with $5\,000$ iterations per run.
Specifically, we independently sample
\[
    w \sim \mathcal U\bigl(u_1^\perp\cap\sphere^{d-1}\bigr) = \mathcal U \bigl(\{ z \in \sphere^{d-1} \mid \scp{z}{u_1} = 0\}),
    \qquad u \sim \mathcal U(0,1),
    \qquad s \sim \mathcal U\{-1,1\},
\]
and set
\[
    L(w)\coloneqq
    \sum_{i=2}^{d}(\lambda_1-\lambda_i)
    |\langle w,u_i\rangle|^2,
    \qquad
    t\coloneqq\frac{u\eta\delta}{L(w)},
    \qquad
    v^0\coloneqq s\sqrt{1-t}\,u_1+\sqrt{t}\,w.
\]
This construction gives
$\varepsilon_0=u\eta\delta\leq\eta\delta$ and
$\EE[\varepsilon_0]=\eta\delta/2$.
It defines a random distribution inside
$\mathcal E_{\eta,\delta}$, rather than uniform surface measure on that region.
For this setting we report the same as in the first setting in Fig.~\ref{fig: sec4} (middle).
The observed ratio is depicted in Fig.~\ref{fig: sec4} (right), here, we consider $\eta \in \{0.01, 0.1, 0.5\}$.

\begin{figure}[t!]
    \resizebox{\linewidth}{!}{%
    \begin{tabular}{@{}r@{\quad}c@{\quad}c@{}}
    & \multicolumn{2}{c}{\legline[plotblue]{}\;\;Spread spectrum \qquad
    \legline[plotorange]{}\;\;Clustered at $\lambda_2 = \lambda_1 - \delta$ \qquad
    \legline[plotgreen]{}\;\;Clustered at $\lambda_d = 0$} \\
    & \multicolumn{2}{c}{\legline[gray, dashed]{\times}\;\;Local contraction $q^k$ \qquad
    \qquad \legbandunified\;\;$95\%$ bootstrap interval} \\
    \rotatebox{90}{\hspace{80pt} $\EE[\varepsilon_k] / \varepsilon_0$} 
    & \includegraphics[width=0.7\linewidth, clip=true, trim=20pt 20pt 520pt 20pt]{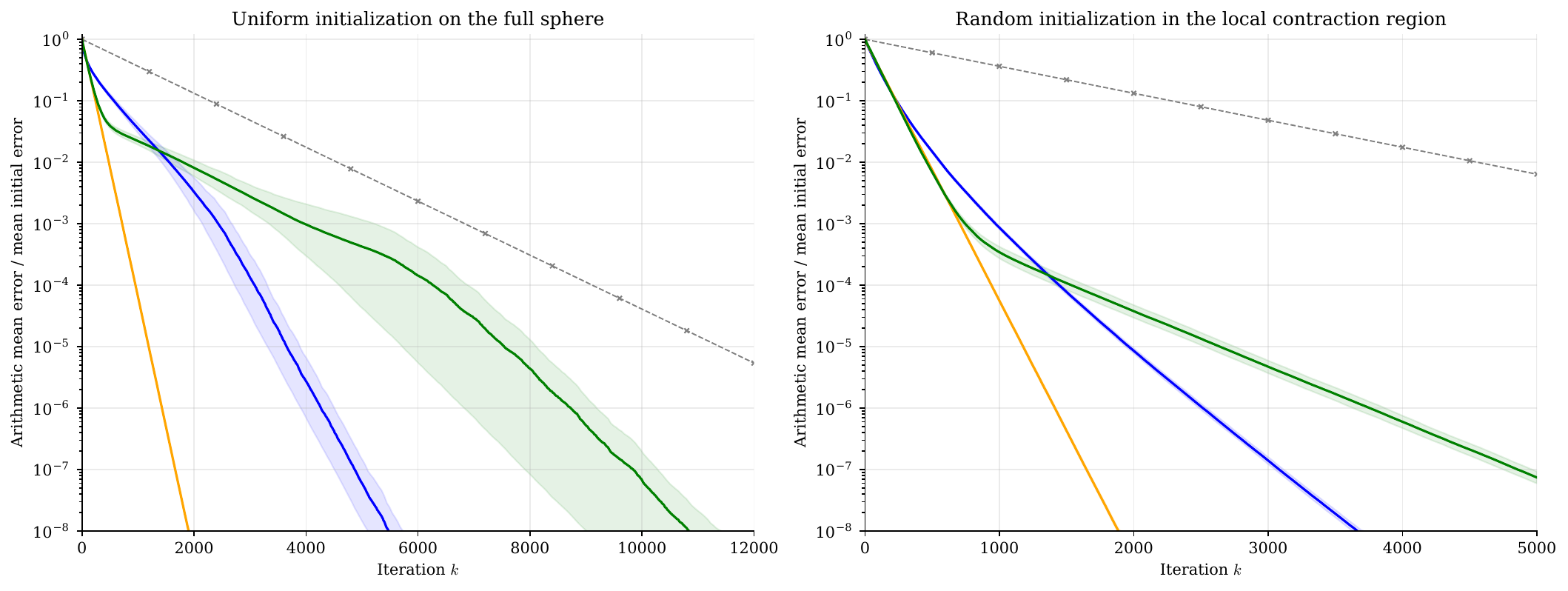} 
    & \includegraphics[width=0.72\linewidth, clip=true, trim=525pt 20pt 0pt 20pt]{spectrum_global_vs_local_gray_eta0.1_bootstrap.pdf} \\[-5pt]
    & \tiny Iteration $k$
    & \tiny Iteration $k$
    \end{tabular}}
    \caption{Comparison of the 
    relative error for three spectra scenarios using uniform random initialization on the unit sphere (\emph{left}), and the random initialization in the local contraction region for $\eta = 0.1$ (\emph{right}).}
    \label{fig: sec4}
\end{figure}

\begin{wraptable}{r}{0.55\textwidth}  
    \resizebox{\linewidth}{!}{%
    \begin{tabular}{p{0.35\linewidth} p{0.28\linewidth} p{0.13\linewidth} p{0.3\linewidth}}
    \toprule 
    Spectral configuration & Observed rate $- \log \hat q$ & Ratio $\tfrac{\log \hat q}{\log q_0}$ & Final mean / initial error \\
    \midrule
    Spread spectrum & $5.905\times10^{-3}$ & $5.84$ & $5.597\times10^{-11}$ \\
    Clustered at $\lambda_2$ & $9.731\times10^{-3}$ & $9.63$ & $5.464\times10^{-22}$ \\
    Clustered at zero & $5.657\times10^{-3}$ & $5.60$ & $7.441\times10^{-8}$ \\
    \bottomrule
    \end{tabular}}
    \caption{Results for \(\eta=0.1\) using $256$ random-local runs as for Fig.~\ref{fig: sec4} (right). }
    \label{tab: sec4}
\end{wraptable}
The main observation in Fig.~\ref{fig: sec4} is that a zero eigenvalue with multiplicity $d-2$ produces the slowest convergence. For the spread spectrum, the rate is intermediate, whereas the configuration with a repeated second-largest eigenvalue yields the fastest observed convergence.
Changing $\eta$ has a comparatively modest effect on the fitted empirical rates. 
Since the theoretical rate $q(\eta,\delta, \Delta)$ increases with $\eta$, the empirical-to-theoretical rate ratios increase with $\eta$, as illustrated in Fig.~\ref{fig: sec4}. 
Table~\ref{tab: sec4} reports the fitted rates $e^{-k\log \hat q}$, their ratios to the theoretical rate, and the final relative mean errors for $\eta=0$, i.e. compared with $q_0 = q(0,\delta,\Delta)$.

\section{Conclusions}
In this paper, we considered a problem of finding the largest singular value of a matrix without involving the adjoint matrix using the random-plane iteration algorithm. Complementing existing results on global almost sure convergence, we established local and global linear convergence rates in the presence of the spectral gap, with the local rate being near-optimal. The global rate depends strongly on the dimension of the underlying problem, and numerical experiments indicate that, in practice, a much faster rate applies and a better global rate could be a subject for future research. We also observe differences between local and global convergence behavior, showing that considering both scenarios is necessary. Interestingly, in the last numerical experiment, we saw how other eigenvalues affect the speed of convergence, which the theoretical analysis does not cover.  

\printbibliography

@article{frankl1990some,
  title={Some geometric applications of the beta distribution},
  author={Frankl, Peter and Maehara, Hiroshi},
  journal={Annals of the Institute of Statistical Mathematics},
  volume={42},
  number={3},
  pages={463--474},
  year={1990},
  publisher={Springer},
  doi={10.1007/BF00049302}
}

@misc{bresch2026genoperatornorm,
      title={Generalization of Zeroth-Order Method for Quotients of Quadratic Functions}, 
      author={Jonas Bresch},
      year={2026},
      archivePrefix={arXiv},
      doi={10.48550/arXiv.2604.26913}, 
}

@article{bresch2024matrixfreestochasticcalculationoperator,
      title={Matrix-free stochastic calculation of operator norms without using adjoints}, 
      author={Jonas Bresch and Dirk A. Lorenz and Felix Schneppe and Maximilian Winkler},
      year={2026},
      pages={1349--1374},
      volume={47},
      issue={3},
      journal={SIAM J. Matrix Anal. Appl.},
      doi={10.1137/25M1772277}
}

@article{bresch2025computingadjointmismatchlinear,
      title={Computing adjoint mismatch of linear maps}, 
      author={Jonas Bresch and Dirk A. Lorenz and Felix Schneppe and Maximilian Winkler},
      year={2026},
      journal={Journal of Computational and Applied Mathematics},
      doi={10.1016/j.cam.2026.117853}, 
      pages={117853},
      volume={488},
      publisher={Elsevier}
}

@article{halko2011randomapproximatematrixdecomposition,
author = {Halko, N. and Martinsson, P. G. and Tropp, J. A.},
title = {Finding structure with randomness: {P}robabilistic algorithms for constructing approximate matrix decompositions},
journal = {SIAM Review},
volume = {53},
number = {2},
pages = {217--288},
year = {2011},
doi = {10.1137/090771806},
}

@article{vonMises1929power,
  author = {von Mises, Richard and Pollaczek-Geiringer, Hilda},
  title  = {Praktische {V}erfahren der {G}lei\-chungsauflösung},
  journal= {Z. Angew. Math. Mech.},
  volume = {9},
  pages  = {152--164},
  year   = {1929},
  doi    = {10.1002/zamm.19290090206},
}

@book{parlett1998symmetriceigenvalueproblem,
author = {Parlett, Beresford N.},
title = {The {S}ymmetric {E}igenvalue {P}roblem},
publisher = {Society for Industrial and Applied Mathematics},
year = {1998},
doi = {10.1137/1.9781611971163},
address = {Philadelphia, PA, USA},
edition   = {},
series = {Classics in Applied Mathematics},
pages = {xxiv + 391}
}

@article{kuczynski1992estimating,
  title={Estimating the largest eigenvalue by the power and {L}anczos algorithms with a random start},
  author={Kuczy{\'n}ski, Jacek and Wo{\'z}niakowski, Henryk},
  journal={SIAM J. Matrix Anal. Appl.},
  volume={13},
  number={4},
  pages={1094--1122},
  year={1992},
  publisher={SIAM},
  doi={10.1137/0613066}
}

@article{li2023stochastic,
  title={Stochastic zeroth-order {R}iemannian derivative estimation and optimization},
  author={Li, Jiaxiang and Balasubramanian, Krishnakumar and Ma, Shiqian},
  journal={Math. Oper. Res.},
  volume={48},
  number={2},
  pages={1183--1211},
  year={2023},
  publisher={INFORMS},
  doi = {10.1287/moor.2022.1302}
}

@article{Polyak1963,
  author = {Polyak, B. T.},
  title = {Gradient Methods for the Minimisation of Functionals},
  journal = {U.S.S.R. Comput. Math. and Math. Phys.},
  volume = {3},
  number = {4},
  pages = {864--878},
  year = {1963},
  doi = {10.1016/0041-5553(63)90382-3}
}

@inproceedings{Karimi2016LinearConvergence,
  author = {Karimi, Hamed and Nutini, Julie and Schmidt, Mark},
  title = {Linear Convergence of Gradient and Proximal-Gradient Methods
           Under the {Polyak--{\L}ojasiewicz} Condition},
  booktitle = {Machine Learning and Knowledge Discovery in Databases},
  series = {Lect. Notes Comput. Sci.},
  volume = {9851},
  pages = {795--811},
  publisher = {Springer},
  year = {2016},
  doi = {10.1007/978-3-319-46128-1_50},
  % url = {https://arxiv.org/abs/1608.04636}
}

@inproceedings{Ji2019ImprovedZeroth-Order,
  author    = {Ji, Kaiyi and Wang, Zhe and Zhou, Yi and Liang, Yingbin},
  title     = {Improved Zeroth-Order Variance Reduced Algorithms and Analysis for Nonconvex Optimization},
  booktitle = {Proceedings of the 36th International Conference on Machine Learning},
  series    = {Proc. Mach. Learn. Res.},
  volume    = {97},
  pages     = {3100--3109},
  publisher = {PMLR},
  year      = {2019},
  url       = {https://proceedings.mlr.press/v97/ji19a.html}
}

@inproceedings{Han2025EfficientOptimizationOrthogonalityConstraint,
  author    = {Han, Andi and Poirion, Pierre-Louis and Takeda, Akiko},
  title     = {Efficient Optimization with Orthogonality Constraint: A Randomized {Riemannian} Submanifold Method},
  booktitle = {Proceedings of the 42nd International Conference on Machine Learning},
  series    = {Proc. Mach. Learn. Res.},
  volume    = {267},
  pages     = {21814--21843},
  publisher = {PMLR},
  year      = {2025},
  url       = {https://proceedings.mlr.press/v267/han25f.html}
}

@article{zhou2025inexact,
  title={Inexact {R}iemannian gradient descent method for nonconvex optimization with strong convergence},
  author={Zhou, Juan and Deng, Kangkang and Wang, Hongxia and Peng, Zheng},
  journal={J. Sci. Comput.},
  volume={103},
  number={3},
  pages={96},
  year={2025},
  publisher={Springer},
  doi={10.1007/s10915-025-02913-1}
}

@article{alimisis2024geodesic,
  title={Geodesic convexity of the symmetric eigenvalue problem and convergence of steepest descent},
  author={Alimisis, Foivos and Vandereycken, Bart},
  journal={J. Optim. Theory Appl.},
  volume={203},
  number={1},
  pages={920--959},
  year={2024},
  publisher={Springer},
  doi = {10.1007/s10957-024-02538-8}
}

@inproceedings{XuKeGao2017,
  author    = {Xu, Zhiqiang and Ke, Yiping and Gao, Xin},
  title     = {A Fast Stochastic {Riemannian} Eigensolver},
  booktitle = {Proceedings of the 33rd Conference on Uncertainty
               in Artificial Intelligence},
  date      = {2017},
  series    = {Proceedings of Machine Learning Research},
  volume    = {R15},
  pages     = {1--10},
  publisher = {PMLR},
  url       = {https://proceedings.mlr.press/r15/xu17a.html},
}

@article{alimisis2021distributed,
  title={Distributed principal component analysis with limited communication},
  author={Alimisis, Foivos and Davies, Peter and Vandereycken, Bart and Alistarh, Dan},
  journal={NeurIPS},
  volume={34},
  pages={2823--2834},
  year={2021},
  url = {https://proceedings.neurips.cc/paper_files/paper/2021/file/1680e9fa7b4dd5d62ece800239bb53bd-Paper.pdf}
}

@inbook{trefethen2005spectrapseudospectra,
title = {{IV.} {T}ransient Effects and Nonnormal Dynamics},
booktitle = {Spectra and {P}seudospectra: {T}he {B}ehavior of {N}onnormal {M}atrices and {O}perators},
author = {Lloyd N. Trefethen and Mark Embree},
publisher = {Princeton University Press},
address = "Princeton, NJ, USA",
pages = {133--192},
doi = {10.1515/9780691213101-006},
year = {2005},
}

@article{li2011concise,
  title={Concise formulas for the area and volume of a hyperspherical cap},
  author={Li, Shengqiao},
  year={2011},
  journal = {Asian J. Math. Stat.}, 
  volume={4},
  number={1},
  pages={66-70},
  doi = {10.3923/ajms.2011.66.70}
}

@article{peterson2017monte,
  title={Monte {C}arlo-based quantitative pinhole {SPECT} reconstruction using a ray-tracing back-projector},
  author={Peterson, Mikael and Gustafsson, Johan and Ljungberg, Michael},
  journal={EJNMMI Phys.},
  volume={4},
  number={1},
  pages={32},
  year={2017},
  publisher={Springer},
  doi = {10.1186/s40658-017-0198-z}
}

@article{zhang2016unmatchedprojback,
author = {Zhang, Hanming and Wang, Linyuan and Li, Lei and Cai, Ailong and Hu, Guoen and Yan, Bin},
title = {Iterative metal artifact reduction for x-ray computed tomography using unmatched projector/backprojector pairs},
journal = {Med. Phys.},
volume = {43},
number = {6 Part 1},
pages = {3019-3033},
doi = {https://doi.org/10.1118/1.4950722},
year = {2016}
}

@Book{buzug2008ct,
  address = {Berlin},
  author = {Buzug, Thorsten M.},
  publisher = {Springer},
  title = {Computed tomography from photon statistics to modern cone-beam {CT}},
  year = {2008},
  address = {Berlin, Heidelberg, Germany},
  doi={10.1007/978-3-540-39408-2}
}

@article{xie2015effective,
    doi = {10.1371/journal.pone.0142184},
    author = {Xie, Lizhe AND Hu, Yining AND Yan, Bin AND Wang, Lin AND Yang, Benqiang AND Liu, Wenyuan AND Zhang, Libo AND Luo, Limin AND Shu, Huazhong AND Chen, Yang},
    journal = {PLOS ONE},
    publisher = {Public Library of Science},
    title = {An Effective {CUDA} Parallelization of Projection in Iterative Tomography Reconstruction},
    year = {2015},
    month = {11},
    volume = {10},
    pages = {1-17},
    number = {11},

}

@article{alimisis2024gradient,
  title={Gradient-type subspace iteration methods for the symmetric eigenvalue problem},
  author={Alimisis, Foivos and Saad, Yousef and Vandereycken, Bart},
  journal={SIAM J. Matrix Anal. Appl.},
  volume={45},
  number={4},
  pages={2360--2386},
  year={2024},
  publisher={SIAM},
  doi={10.1137/23M1590792}
}

@misc{bresch2025stochastic,
  title={Stochastic Zeroth-Order Method for Computing Generalized Rayleigh Quotients},
  author={Bresch, Jonas and Melnyk, Oleh and Schoen, Martin and Steidl, Gabriele},
  archivePrefix={arXiv},
  year={2025},
  doi={10.48550/arXiv.2512.05520}
}

@article{van2016fast,
  title={Fast and flexible X-ray tomography using the ASTRA toolbox},
  author={van Aarle, Wim and Palenstijn, Willem Jan and Cant, Jeroen and Janssens, Eline and Bleichrodt, Folkert and Dabravolski, Andrei and De Beenhouwer, Jan and Batenburg, K. Joost and Sijbers, Jan},
  journal={Optics express},
  volume={24},
  number={22},
  pages={25129--25147},
  year={2016},
  publisher={Optical Society of America},
doi = {10.1364/OE.24.025129},
}

@article{hansen2021GMRESMF,
  title={{GMRES} {M}ethods for Tomographic Reconstruction with an Unmatched Back Projector},
  author={Hansen, Per Christian and Hayami, Ken and Morikuni, Keiichi},
  journal={J. Comput. Appl. Math.},
  year={2021},
  volume={413},
  pages={114352},
doi = {10.1016/j.cam.2022.114352}
}

@book{hansen2021CT,
author = {Hansen, Per Christian and Jørgensen, Jakob and Lionheart, William R.B.},
title = {Computed Tomography: Algorithms, Insight, and Just Enough Theory},
publisher = {Society for Industrial and Applied Mathematics},
year = {2021},
doi = {10.1137/1.9781611976670},
address = {Philadelphia, PA},
}

@article{huber2025convergence,
  title={Convergence of Ray-and Pixel-Driven Discretization Frameworks in the Strong Operator Topology},
  author={Huber, Richard},
  year={2025},
  doi={10.3934/ipi.2026007},
  journal={Inverse Probl. Imaging},
}

@article{delcorso1997estimating,
  author  = {Del Corso, Gianna M.},
  title   = {Estimating an Eigenvector by the Power Method with a Random Start},
  journal = {SIAM Journal on Matrix Analysis and Applications},
  year    = {1997},
  volume  = {18},
  number  = {4},
  pages   = {913--937},
  doi     = {10.1137/S0895479895296689}
}

@article{kazemi2024efficient,
  title={Efficient zeroth-order proximal stochastic method for nonconvex nonsmooth black-box problems},
  author={Kazemi, Ehsan and Wang, Liqiang},
  journal={Machine Learning},
  volume={113},
  number={1},
  pages={97--120},
  year={2024},
  publisher={Springer},
  doi={10.1007/s10994-023-06409-7}
}

\end{document}